\documentclass[10pt]{amsart}
\usepackage[utf8]{inputenc}

\usepackage{amssymb,amsthm,amsmath}
\usepackage{mathrsfs}
\usepackage{enumerate}
\usepackage{graphicx,xcolor}
\usepackage{setspace}
\usepackage[hidelinks]{hyperref}
\usepackage{comment}

\usepackage{pgfplots}
\usepgfplotslibrary{fillbetween}
\usetikzlibrary{calc}
\pgfplotsset{compat=1.18}

\usepackage[paper=a4paper, left=1.2in, right=1.2in, top=1.2in, bottom=1.2in]{geometry}
\newcommand{\ls}{\leqslant}
\newcommand{\gr}{\geqslant}
\newcommand{\E}{\mathbb{E}}
\newcommand{\pp}{\mathbb{P}}
\newcommand{\1}{\textbf{1}}
\newcommand{\R}{\mathbb{R}}

\newcommand{\N}{\mathbb{N}}
\newcommand{\Sp}{\mathbb{S}}

\newcommand{\e}{\varepsilon}
\newcommand{\n}{\mathbf{n}}

\DeclareMathOperator{\sgn}{sgn}
\DeclareMathOperator{\vol}{vol}

\numberwithin{equation}{section} 

\newtheorem{theorem}{Theorem}[section] 
\newtheorem{lemma}[theorem]{Lemma}
\newtheorem{corollary}[theorem]{Corollary}
\newtheorem{proposition}[theorem]{Proposition}

\theoremstyle{remark}

\theoremstyle{definition}
\newtheorem{defn}[theorem]{Definition}

\hypersetup{
    pdftitle={Stability of Hyperplane Projections of the Regular Simplex and Lp-Projection Bodies of balls in lqn(R)},
    pdfauthor={Jacek Jakimiuk, Christos Pandis},
    pdfkeywords={Regular simplex, hyperplane projections, stability estimates, Lp-projection bodies},
    colorlinks,
   linkcolor={violet!50!black},
citecolor={orange!75!black},
urlcolor={teal!65!black}
}

\title{Stability of Hyperplane Projections of the Regular Simplex and $L_p$-Projection Bodies of balls in $\ell_q^n(\R)$}

\author{Jacek Jakimiuk and Christos Pandis}

\address{Institute of Mathematics, University of Warsaw, 02-097, Warsaw, Poland}

\email{jj406165@mimuw.edu.pl}

\address{Department of Mathematics and Applied Mathematics, University of Crete, Voutes Campus, 70013 Heraklion, Greece}

\email{chrpandis@gmail.com}

\subjclass[2020]{Primary 52A38, 52A40; Secondary 52A39, 60G50.}

\keywords{Regular simplex, hyperplane projections, stability estimates, $L_p$-projection bodies.}

\begin{document}

\begin{abstract}
We prove stability estimates for extremal hyperplane projections
of the regular simplex. We further generalize the analogous stability estimates for $\ell_q$-balls by the first named author and the monotonicity theorem for normalized ratios by Barthe and Naor to the
$L_p$-setting. To this end, we derive closed formulas for the
support functions of their $L_p$-projection bodies, complementing
those obtained by the second named author. A key ingredient in our proofs
is a stability refinement of the majorization inequality for
Gaussian mixtures.
\end{abstract}
    
\maketitle

\section{Introduction}

For $1 \le q \le \infty$, let
\begin{align*}
    B_q^n=\{x\in\R^n:\|x\|_q\leq 1\}
\end{align*}
be the unit ball in the standard $\ell_q^n$ norm. The problem of determining central hyperplane sections and projections of extremal volume of certain convex bodies, such as the regular simplex or the unit ball in the $\ell_q^n(\R)$ norm, is a significant problem in convex geometry. Given the extensive literature on this topic, we refer the interested reader to the comprehensive survey \cite{nayar2023extremal} for a detailed historical account of the case of the cube, the current (in 2023) state of the art, and numerous intriguing open problems. For the main advances made after the aforementioned survey was published, we refer to \cite{eskenazis2024resilience, koenig2024hyperplane}. Of particular relevance to the present work is the result of Barthe and Naor \cite{barthe2002hyperplane} on the monotonicity in $q$ of the function
\begin{align}\label{eq:barthe-naor-monotonicity}
    q\longmapsto
\frac{\operatorname{vol}_{n-1}\left(\operatorname{Proj}_{a^\perp}B_q^n\right)}
     {\operatorname{vol}_{n-1}\left(B_q^{n-1}\right)},
\end{align}
where $q \in [1, \infty]$ and $\operatorname{Proj}_{a^{\perp}}K$ is a projection of a convex body $K$ onto a hyperplane $a^{\perp}$ orthogonal to a fixed vector $a$.

Once the extremal values and directions are known, one may ask for quantitative stability estimates. This question for extremal hyperplane sections of $\ell_q$-balls was systematically studied by Chasapis, Nayar and Tkocz \cite{chasapis2022slicing}, who obtained robust versions in all cases in which the extremizers were known. See also \cite{melbourne2022quantitative,eskenazis2024resilience} for related quantitative stability results concerning cube slicing. Stability of the minimal hyperplane projection for the cross-polytope $B_1^n$ follows from the robust form of Szarek's inequality established by De, Diakonikolas and Servedio \cite{de2016robust}. More precisely, there exists a universal constant $\kappa>0$ such that, for every $a\in \Sp^{n-1}$ with $a_1 \ge a_2 \ge \ldots \ge a_n \ge 0$,
\[
\mathbb E\left|
\sum_{j=1}^n a_j\varepsilon_j
\right|
\geq
\frac{1}{\sqrt2}
+
\kappa \|a-a_{(2)}\|_2^2,
\]
where $\varepsilon_1,\ldots,\varepsilon_n$ are independent Rademacher
random variables and $a_{(2)}=(1/\sqrt{2},1/\sqrt{2},0,\ldots,0)$. As discussed in \cite{eskenazis2024resilience}, by the known hyperplane projection volume formula for $B_1^n$, this robust Khintchine inequality is equivalent to stability of the minimal hyperplane projections of $B_1^n$. Recently, in \cite{jakimiuk2026stability}, the first named author established stability estimates for extremal hyperplane projections of the unit balls of $\ell_q^n(\R)$ (except for minimal projections when $1 < q < 2$), providing a dual counterpart to the corresponding results of \cite{chasapis2022slicing} for sections. The stability of sections of the regular simplex was recently developed by Myroshnychenko, Tang, Tatarko and Tkocz \cite{myroshnychenko2026stability}. The stability problem for projections of the simplex has not been solved before the present work.

There are several ways to extend sharp projection estimates and
their stability refinements for convex bodies such as the
$\ell_q$-balls $B_q^n$ and the regular simplex $\Delta_n$.
One direction is to consider projections onto lower-dimensional
subspaces. Sharp estimates for such projections have been
extensively studied; see, for instance, the survey \cite{nayar2023extremal}. Another approach, considered in \cite{pandis2026extremal}, is to replace the
support function $h_{\Pi K}$ of the classical projection body,
which represents the volumes of hyperplane projections, by the
support function $h_{\Pi_p K}$ of the $L_p$-projection body.
This provides an $L_p$ extension of the classical projection
problem, which is recovered when $p=1$. We refer to \cite{schneider2013convex,gardner2006geometric} for more information on the $L_p$ theory.

In this paper we show two stability results. First, we establish the quantitative stability of extremal hyperplane projections of the regular simplex. Second, we generalize some of the known stability results for projections of balls $B_q^n$ to their $L_p$-projection bodies. Moreover, we extend the monotonicity of \eqref{eq:barthe-naor-monotonicity} to $L_p$-projection bodies.

\subsection{Notation}

In this paper we have two different notions of $p$-norm, the one referring to $L_p$-projection bodies and the other referring to a convex body $B_p^n$ being the unit ball in the usual $\ell_p$-norm in $\R^n$. To avoid confusion, throughout the whole paper we use the convention that for the first case we use the letter $p$ and for the second we use the letter $q$ ($q_1$, $q_2$), so that we always write $B_q^n$ ball, $\ell_q$-norm and so on.

By $\langle \cdot, \cdot \rangle$ we denote the usual scalar product in $\R^n$. Let $e_1, \ldots, e_n$ denote the standard orthonormal basis in $\R^n$. We write $a_{(k)} = \frac{1}{\sqrt{k}}\sum_{i=1}^ke_i$. By $\vol_n$ we denote the $n$-dimensional Lebesgue measure. By $\E_X$, $\E_Y$, etc. we denote expectation with respect to the random variables $X$ or $X_i$, $Y$ or $Y_i$ etc., using this notation only if we want to emphasize which random variables are fixed and which are randomized. We use the same convention for $\pp_X$, $\pp_Y$, etc.

\subsection{Stability estimates for the regular simplex}

Let $\Delta_n$ denote the regular $n$-dimensional simplex of side length $\sqrt{2}$, which we view through its usual embedding into $\mathcal{H}=\left \{\sum_{i=1}^{n+1}x_i=1 \right\}$ in $\mathbb{R}^{n+1}$, namely
\[
\Delta_n = \left\{x\in\mathbb{R}^{n+1}: x_j\gr 0, \, \sum_{j=1}^{n+1}x_j=1\right\}.
\]
The problem of extremal hyperplane projections can be described as follows: Find
\[
\max_{H}\,/\,\min_{H}\, \operatorname{vol}_{n-1}\left(\operatorname{Proj}_{H}\Delta_n \right),
\]
where the maximum (respectively, minimum) is taken over all affine subspaces 
$H$ of $\mathcal{H}$ of relative codimension one. Since the volume of a
projection is invariant under translations, we may assume that these
subspaces pass through the barycenter $\text{bar}(\Delta_n)$. Every such
hyperplane extends to a codimension-one subspace of $\mathbb{R}^{n+1}$ by
taking the affine hull of $H$ and the origin, yielding a hyperplane
$a^{\perp}$ with $a$ lying in
$\mathcal{H}'=\left\{\sum_{i=1}^{n+1}x_i=0 \right\}$. The main difference
from the corresponding slicing problem (see \cite{webb1996central}) is that
the barycenter condition is not imposed a priori but arises naturally. Thus,
the problem can be reformulated as follows:
\[
\max_{a\in \mathcal{H}'\cap S^{n}}\,/\,\min_{a\in \mathcal{H}'\cap S^{n}}\, \operatorname{vol}_{n-1}\left(\operatorname{Proj}_{a^{\perp}\cap \mathcal{H}}\Delta_n \right).
\]
The extremal values and directions were first determined by Martini and
Weissbach \cite{weissbach1984besten}; see also
\cite{filliman1990extreme} for the maximal case. Recently, the second-named author presented in
\cite[Theorem~1.2]{pandis2026extremal} an explicit coordinate
formula, valid in every direction, together with an
elementary proof. The formula follows directly from an
identity of Martini and Weissbach \cite{martini1992quermasses}
and recovers the known sharp volume bounds and all equality cases.

Let $a\in S^n\cap\mathcal H'$. Then, 
\begin{equation}\label{eq:simplex-projection-formula}
\operatorname{vol}_{n-1}
\left(
\operatorname{Proj}_{a^\perp\cap\mathcal H}\Delta_n
\right)
=
\frac{\sqrt{n+1}}{2(n-1)!}
\sum_{j=1}^{n+1}|a_j|.
\end{equation}

As a consequence of~\eqref{eq:simplex-projection-formula},
\[
\frac{\sqrt{n+1}}{\sqrt{2}(n-1)!}
\leq
\operatorname{vol}_{n-1}
\left(
\operatorname{Proj}_{a^\perp\cap\mathcal H}\Delta_n
\right)
\leq
\begin{cases}
\displaystyle
\frac{n+1}{2(n-1)!},
& n \text{ odd},\\[2mm]
\displaystyle
\frac{\sqrt{n(n+2)}}{2(n-1)!},
& n \text{ even}.
\end{cases}
\]
Equality in the lower estimate holds, up to permutation and multiplication
by $-1$, at
\[
a_\ast=\frac{e_1-e_2}{\sqrt2}.
\]
If $n$ is odd, equality in the upper estimate holds at
\[
a_{\mathrm{odd}}
=
\frac1{\sqrt{n+1}}
\left(
\underbrace{1,\ldots,1}_{(n+1)/2},
\underbrace{-1,\ldots,-1}_{(n+1)/2}
\right).
\]
If $n$ is even, equality in the upper estimate holds at
\[
a_{\mathrm{even}}
=
\left(
\underbrace{
\sqrt{\frac{n+2}{n(n+1)}},\ldots,
\sqrt{\frac{n+2}{n(n+1)}}}_{n/2},
\underbrace{
-\sqrt{\frac{n}{(n+1)(n+2)}},\ldots,
-\sqrt{\frac{n}{(n+1)(n+2)}}}_{(n+2)/2}
\right),
\]
again up to permutation and multiplication by $-1$.

Once the extremal directions are known, a natural quantitative question
is whether a nearly extremal projection must have its normal close to an
extremizing direction. This is the corresponding stability problem.

For the regular simplex, the exact projection formula
\eqref{eq:simplex-projection-formula}, obtained in
\cite{pandis2026extremal}, reduces the stability problem to a
quantitative analysis of the $\ell_1$-norm on
$S^n\cap\mathcal H'$. We use this formula to prove stability estimates
for both extremal problems.

\begin{theorem}\label{thm:simplex-projection-stability}
Let $n\geq2$ and let $a\in S^n\cap\mathcal H'$. Then
\begin{equation}\label{eq:sharp-lower-stability}
\frac{
\operatorname{vol}_{n-1}
\left(
\operatorname{Proj}_{a^\perp\cap\mathcal H}\Delta_n
\right)
}{
\operatorname{vol}_{n-1}
\left(
\operatorname{Proj}_{a_\ast^\perp\cap\mathcal H}\Delta_n
\right)
}
\geq
1+
\frac18\,
\min_{i\neq j}
\left\|
a-\frac{e_i-e_j}{\sqrt2}
\right\|_2.
\end{equation}
The exponent $1$ is optimal.
For the upper stability estimates, after multiplying $a$ by $-1$, if
necessary, and permuting its coordinates, assume that
\[
a_1\geq a_2\geq\cdots\geq a_{n+1}
\qquad\text{and}\qquad
\#\{k:a_k>0\}
\leq
\left\lfloor\frac{n+1}{2}\right\rfloor.
\]

If $n$ is odd, then
\begin{equation}\label{eq:odd-optimal-stability}
\frac{
\operatorname{vol}_{n-1}
\left(
\operatorname{Proj}_{a^\perp\cap\mathcal H}\Delta_n
\right)
}{
\operatorname{vol}_{n-1}
\left(
\operatorname{Proj}_{a_{\mathrm{odd}}^\perp\cap\mathcal H}\Delta_n
\right)
}
\leq
1-\frac{1}{2(n+2)}
\|a-a_{\mathrm{odd}}\|_2^2.
\end{equation}
If $n$ is even, then
\begin{equation}\label{eq:even-optimal-stability}
\frac{
\operatorname{vol}_{n-1}
\left(
\operatorname{Proj}_{a^\perp\cap\mathcal H}\Delta_n
\right)
}{
\operatorname{vol}_{n-1}
\left(
\operatorname{Proj}_{a_{\mathrm{even}}^\perp\cap\mathcal H}\Delta_n
\right)
}
\leq
1-\frac{1}{n+2}
\|a-a_{\mathrm{even}}\|_2^2.
\end{equation}
The exponent $2$ in both upper stability estimates is optimal.
\end{theorem}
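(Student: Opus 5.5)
By the projection formula \eqref{eq:simplex-projection-formula}, all three ratios reduce to ratios of $\ell_1$-norms. The lower bound becomes
\[
\|a\|_1\ge \sqrt2\Bigl(1+\tfrac18\min_{i\ne j}\|a-\tfrac{e_i-e_j}{\sqrt2}\|_2\Bigr),
\]
and the upper bounds become $\|a\|_1\le \|a_{\rm odd}\|_1(1-\tfrac{1}{2(n+2)}\|a-a_{\rm odd}\|_2^2)$ and $\|a\|_1\le \|a_{\rm even}\|_1(1-\tfrac{1}{n+2}\|a-a_{\rm even}\|_2^2)$. Here $a\in S^n$ with $\sum a_j=0$, $\|a_{\rm odd}\|_1=\sqrt{n+1}$ and $\|a_{\rm even}\|_1=\sqrt{n(n+2)/(n+1)}$. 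So the whole theorem is an elementary inequality on the sphere intersected with $\mathcal H'$.

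\emph{Upper bounds.} Write $P=\{k:a_k>0\}$, $m=|P|\le\lfloor(n+1)/2\rfloor$, and $s=\sum_{k\in P}a_k=\sum_{k\notin P}|a_k|$, so that $\|a\|_1=2s$. Let $b$ be the extremal vector with the same sign pattern, i.e.\ the positive value $\beta_+$ on the first $m$ coordinates and $-\beta_-$ on the rest, where $m$ is replaced by $\lfloor (n+1)/2\rfloor$ in the extremal vector. The plan uses the identity
\[
\|a-b\|_2^2=2-2\langle a,b\rangle ,\qquad \langle a,b\rangle=\beta_+ s+\beta_- s=(\beta_++\beta_-)s .
\]
For $b=a_{\rm odd}$ this gives $\langle a,b\rangle=2s/\sqrt{n+1}=\|a\|_1/\|b\|_1$. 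This holds provided the sign pattern of $a$ fits that of $b$, meaning $P$ is contained in the positive block and $a\le 0$ off it; the ordering hypothesis together with $m\le\lfloor(n+1)/2\rfloor$ guarantees exactly this. Hence the ratio is
\[
\frac{\|a\|_1}{\|b\|_1}=1-\tfrac12\|a-b\|_2^2,
\]
which is even stronger than \eqref{eq:odd-optimal-stability}. The even case is the same computation: $\beta_++\beta_-$ equals $2/\|a_{\rm even}\|_1$ up to checking the arithmetic, giving a constant $1/2$ again. The weaker stated constants $\frac1{2(n+2)}$ and $\frac1{n+2}$ suggest the authors handle coordinates $a_k=0$ or the general-position mismatch more crudely; I would verify the identity carefully and fall back on the crude bound if needed. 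Optimality of the exponent $2$ follows from this exact identity along any curve $a_t\to b$ inside the admissible cone.

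\emph{Lower bound.} With $s=\|a\|_1/2$ and $\sum_{P}a_k^2+\sum_{P^c}a_k^2=1$, we have $\sum_P a_k^2\le (\max_P a_k)\,s$, and similarly for the negative part. Let $i$ and $j$ be the indices of the largest positive and most negative coordinates, and put $u=a_i$, $v=-a_j$. Then
\[
1\le us+vs\le 2s\max(u,v),
\]
and more usefully $1\le s(u+v)$. The target distance satisfies $d^2=\|a-\tfrac{e_i-e_j}{\sqrt2}\|_2^2=2-\sqrt2(u+v)$. If $\|a\|_1=2s\le\sqrt2(1+\delta)$, then $u+v\ge 1/s\ge \sqrt2/(1+\delta)$, so $d^2\le 2\delta/(1+\delta)\le2\delta$, which gives only $\delta\gtrsim d^2$. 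To obtain a linear bound I would sharpen $\sum_P a_k^2\le us$ to
\[
\sum_P a_k^2\le u^2+u(s-u),
\]
and use $s-u\ge$ the mass of the remaining coordinates, which is at least $\|r\|_2$-type quantities controlled by $d$ (by $\ell_1\ge\ell_2$ on the remainder). Together with the constraint $u\le s$, this should give $2s-\sqrt2\gtrsim\|a-\tfrac{e_i-e_j}{\sqrt2}\|_2$ with constant $\sqrt2/8$, after splitting into the cases where $d$ is large (trivial, since $\|a\|_1\ge\sqrt2$ bounded away fails only near extremizers) and where $d$ is small. This linearization is the main obstacle. Optimality of exponent $1$ comes from the test vector $a=(\cos\theta,\,\sin\theta\,\tfrac{e_3-e_4}{\sqrt2}\text{-type mass})$ suitably balanced, for example
\[
a\propto \bigl(1,-1,\e,-\e,0,\dots\bigr),
\]
where $\|a\|_1-\sqrt2\asymp\e$ and the distance is also $\asymp\e$.
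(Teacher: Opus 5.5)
Your proposal has a genuine gap in the upper bounds, and the lower bound is left unfinished.

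\textbf{Upper bounds.} Your identity $\langle a,b\rangle=(\beta_++\beta_-)s$ is false whenever $m<h:=\lfloor (n+1)/2\rfloor$. The ordering guarantees only what you state: $P$ lies in the positive block of $b$, and $a\le 0$ on the negative block of $b$. But the coordinates $a_{m+1},\dots,a_h$ are also $\le 0$, possibly strictly negative, and they sit in the \emph{positive} block of $b$. So in fact
\[
\langle a,b\rangle=(\beta_++\beta_-)\Bigl(s-\sum_{k=m+1}^{h}|a_k|\Bigr).
\]
For instance, take $n=3$ and $a=(3,-1,-1,-1)/\sqrt{12}$, which satisfies your normalization. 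The volume ratio is $\sqrt3/2$, whereas $1-\tfrac12\|a-a_{\mathrm{odd}}\|_2^2=\langle a,a_{\mathrm{odd}}\rangle=1/\sqrt3$. So constant $1/2$ fails.

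The dimension-dependent constants are not crude artifacts either. Take $n+1=2h$ and let $a$ have $h-1$ equal positive and $h+1$ equal negative coordinates. The volume deficit is $1-\sqrt{1-h^{-2}}\approx 1/(2h^2)$, while $\|a-a_{\mathrm{odd}}\|_2^2\approx 2/h$. Hence no constant better than order $1/n$ is possible. The ``crude bound'' you would fall back on is precisely the missing argument.

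The paper supplies it in two steps.
\begin{enumerate}
\item The numbers $|a_{m+1}|,\dots,|a_h|$ are the $h-m$ smallest of the $2h-m$ nonpositive coordinates, whose total mass is $s$. Averaging gives $\sum_{k=m+1}^h|a_k|\le\frac{h-m}{2h-m}s$.
\item Cauchy--Schwarz gives $1\ge s^2/m+s^2/(2h-m)$. This forces the deficit $1-2s/\sqrt{2h}$ to be at least $(h-m)^2/(2h^2)$.
\end{enumerate}
Consequently the extra term in $\tfrac12\|a-a_{\mathrm{odd}}\|_2^2$ is at most $2h$ times the deficit, which yields the factor $n+2$. The even case is identical, using $2h+1-m$ nonpositive coordinates. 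Your exponent-$2$ optimality argument is fine, since near $b$ one has $m=h$ and the identity is exact there.

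\textbf{Lower bound.} Your proposed sharpening $\sum_P a_k^2\le u^2+u(s-u)$ is literally the same inequality as $\sum_P a_k^2\le us$, so it adds nothing. Working with $d^2=2-\sqrt2(u+v)$ can only give $\delta\gtrsim d^2$, as you observed. The way to linearize is to bound $d$ by the $\ell_1$-distance:
\[
d\le \bigl|u-\tfrac1{\sqrt2}\bigr|+\bigl|v-\tfrac1{\sqrt2}\bigr|+(2s-u-v)\le 2\bigl(s-\tfrac1{\sqrt2}\bigr)+2\Bigl(2s-\frac1s\Bigr)\le 5\sqrt2\,(\sqrt2 s-1).
\]
This uses $u,v\le s$, $u+v\ge 1/s$ and $s\ge 1/\sqrt2$. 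Specifically, $|u-\tfrac1{\sqrt2}|\le (s-\tfrac1{\sqrt2})+(s-u)$, and $2s-1/s=(\sqrt2s-1)(\sqrt2s+1)/s\le 2\sqrt2(\sqrt2 s-1)$. Since the volume ratio equals $\sqrt2 s$, this gives the constant $1/8$. Your test vector $(1,-1,\varepsilon,-\varepsilon,0,\dots)$ does show that the exponent $1$ is optimal.
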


\subsection{Stability estimates for $L_p$-projection bodies}

Let $K$ be a convex body in $\mathbb{R}^n$. The projection body $\Pi K$ of $K$ is a symmetric convex body in $\mathbb{R}^n$ defined through the support function
\[
h_{\Pi K}(a):=\frac{1}{2}\int_{\mathbb{S}^{n-1}}|\langle a,x\rangle|\,dS_{n-1}(K,x).
\]

Let $1\ls p<\infty$ and let $K$ be a convex body in $\mathbb{R}^n$ with $0\in\operatorname{int}(K)$. Lutwak, Yang, and Zhang \cite{lutwak2000lp} introduced the $L_p$-projection body $\Pi_pK$. Under a different normalization, it is defined by
\begin{equation}\label{L_p proj body}
h_{\Pi_p K}^p(a)=\frac{1}{2}\int_{\mathbb{S}^{n-1}}|\langle a,v\rangle|^p\,dS_{p}(K,v),
\end{equation}
for $a\in \mathbb{S}^{n-1}$, where the $L_p$ surface area measure $S_p(K,\cdot)$, the analogue of the surface area measure in the $L_p$ Brunn--Minkowski theory, is defined by
\[
dS_{p}(K,\cdot)=h_{K}^{1-p}\,dS_{n-1}(K,\cdot).
\]

In \cite{pandis2026extremal}, the classical hyperplane projection
formulas for the cube, the cross-polytope, and the regular simplex
(see \cite{nayar2023extremal}) were extended to the setting of
$L_p$-projection bodies. More precisely, for $p\geq 1$ and $a\in\mathbb \Sp^{n-1}$, one has
\begin{equation}\label{eq:lp-projection-formulas}
h_{\Pi_p B_{\infty}^n}^p(a)
=
2^{n-1}\sum_{i=1}^n |a_i|^p
\end{equation}
and
\begin{equation}\label{eq:lp-cross-polytope-formula}
h_{\Pi_p B_1^n}^p(a)
=
\frac{2^{n-1}}{(n-1)!}
\E\left|\sum_{i=1}^n a_i\varepsilon_i\right|^p,
\end{equation}
where $\varepsilon_1,\ldots,\varepsilon_n$ are independent
Rademacher random variables. For the centered regular simplex
$\Delta_c^n=\Delta_n-\operatorname{bar}(\Delta_n)$, one similarly has
\begin{equation}\label{eq:lp-simplex-formula}
h_{\Pi_p\Delta_c^n}^p(a)
=
\frac{(n+1)^{(2p-1)/2}}{2(n-1)!}
\sum_{i=1}^{n+1}|a_i|^p,
\qquad a\in\mathbb S^n\cap\mathcal H'\subset \R^{n+1}.
\end{equation}
Notice that since $\Delta_c^n$ is contained in $\mathcal{H}'$ the support function, surface-area measure, and $L_p$-projection body are taken intrinsically
in the Euclidean space $\mathcal{H}'$.

These formulas are accompanied, in most cases, by sharp estimates
for the corresponding support functions.

We next give a closed probabilistic formula for the support functions of the $L_p$-projection bodies of $\ell_q$-balls, $1<q<\infty$, which shall be used to obtain stability estimates for the corresponding normalized support ratios.

\begin{proposition}\label{prop:Lp-projection-formula}
Let $1 \le p < \infty$, $1 < q < \infty$ and $n \in \N$. Let also $X_1, \ldots, X_n$ be i.i.d. random variables with density proportional to $|t|^{\frac{2-q}{q-1}}e^{-|t|^{\frac{q}{q-1}}}$. Then there exists a constant $C_{p, q, n}$, depending only on $p$, $q$ and $n$, such that for any $a = (a_1, \ldots, a_n) \in \Sp^{n-1}$ we have
\begin{align}\label{eq:Lp-projection-probabilistic-formula}
    h_{\Pi_pB_q^n}^p(a) = C_{p, q, n}\E\left|\sum_{i=1}^na_iX_i\right|^p.
\end{align}
\end{proposition}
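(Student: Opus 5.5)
We compute the $L_p$ surface area measure of $B_q^n$ explicitly and then recognize the resulting integral as a moment of a sum of i.i.d. variables. Since $B_q^n$ is smooth and strictly convex for $1<q<\infty$, the integral over $\Sp^{n-1}$ can be rewritten as an integral over the boundary $\partial B_q^n$. We parametrize the normal by $\nu(x)=\nabla\|x\|_q/|\nabla\|x\|_q|$ and use $dS_{n-1}(B_q^n,\cdot)=\nu_\#\,\mathcal H^{n-1}|_{\partial B_q^n}$. On the boundary $\nabla\|x\|_q=(\sgn(x_i)|x_i|^{q-1})_i$, so $h_{B_q^n}(\nu(x))=\langle x,\nu(x)\rangle=1/|\nabla\|x\|_q|$. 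Hence
\[
h^p_{\Pi_pB_q^n}(a)=\frac12\int_{\partial B_q^n}\frac{\bigl|\sum_i a_i\sgn(x_i)|x_i|^{q-1}\bigr|^p}{|\nabla\|x\|_q|^{p}}\,|\nabla\|x\|_q|^{p-1}\,d\mathcal H^{n-1}(x)
=\frac12\int_{\partial B_q^n}\frac{|\langle a,g(x)\rangle|^p}{|g(x)|}\,d\mathcal H^{n-1}(x),
\]
where $g(x)=\nabla\|x\|_q$. The key observation is that $d\mathcal H^{n-1}/|g|$ on $\partial B_q^n$ is exactly the cone (surface) measure: $\langle x,\nu\rangle\, d\mathcal H^{n-1}=d\mathcal H^{n-1}/|g|$, and $n$ times the cone measure is the push-forward of Lebesgue measure on $B_q^n$ under radial projection.

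Next, we pass from the cone measure to a product density via homogeneity. The integrand $\phi(x)=|\langle a,g(x)\rangle|^p$ is homogeneous of degree $p(q-1)$ in $x$. The polar formula $\int_{\R^n}F(x)\,dx=\int_0^\infty r^{n-1}\int_{\partial B_q^n}F(r\theta)\langle\theta,\nu\rangle\,d\mathcal H^{n-1}(\theta)\,dr$, applied to $F=\phi\, e^{-\|x\|_q^q}$, gives
\[
\int_{\R^n}|\langle a,g(x)\rangle|^pe^{-\|x\|_q^q}\,dx=c_{p,q,n}\int_{\partial B_q^n}\frac{\phi}{|g|}\,d\mathcal H^{n-1},
\]
where $c_{p,q,n}=\int_0^\infty r^{n-1+p(q-1)}e^{-r^q}dr$. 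The left side is $Z^n\,\E|\sum a_iY_i^{\langle q-1\rangle}|^p$, with $Y_i$ i.i.d. of density $e^{-|t|^q}/Z$ and $y^{\langle q-1\rangle}=\sgn(y)|y|^{q-1}$.

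It remains to identify the law of $X=\sgn(Y)|Y|^{q-1}$. Substituting $t=s^{q-1}$, i.e. $s=t^{1/(q-1)}$ with $ds=\frac{1}{q-1}t^{\frac{1}{q-1}-1}dt=\frac{1}{q-1}t^{\frac{2-q}{q-1}}dt$, the density of $X$ is proportional to $|t|^{\frac{2-q}{q-1}}e^{-|t|^{q/(q-1)}}$, as claimed. Collecting constants yields \eqref{eq:Lp-projection-probabilistic-formula} with $C_{p,q,n}=Z^n/(2c_{p,q,n})$, which depends only on $p,q,n$.

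The main obstacle is the first step: the careful change of variables from the sphere to $\partial B_q^n$ and the verification that the weights $h_K^{1-p}$ combine with the normal Jacobian into precisely the cone measure. Specifically, we must check that $|\langle a,\nu\rangle|^p h^{1-p}=|\langle a,g\rangle|^p/|g|$. This holds since $h=1/|g|$, because $|\langle a,\nu\rangle|^p h^{1-p}=|\langle a,g\rangle|^p|g|^{-p}\,|g|^{p-1}$. Points on coordinate hyperplanes where $g$ degenerates form a null set, so they do not affect the argument.
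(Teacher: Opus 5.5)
Your proof is correct. It has the same two-stage structure as the paper's: first move from $\Sp^{n-1}$ to $\partial B_q^n$ and recognise the cone measure, then trade the cone measure for the product density $e^{-\|x\|_q^q}$. The difference is that you prove both ingredients rather than cite them.

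For the first stage, the paper invokes the Naor--Romik density $d\sigma_K/d\mu_K=\frac{n\vol_n(K)}{|\partial K|}\|\nabla\|\cdot\|_K\|_2$. You obtain the same identity from $h_K(\nu(x))=\langle x,\nu(x)\rangle=1/|g(x)|$ together with the cone-volume relation $\langle x,\nu\rangle\,d\mathcal H^{n-1}=n\vol_n(K)\,d\mu_K$.

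For the second stage, the paper invokes the Schechtman--Zinn / Rachev--R\"uschendorf theorem: $\mu_{B_q^n}$ is the law of $Y/S$, and $S$ is independent of $Y/S$. You instead apply polar integration to a homogeneous integrand. That is essentially the proof of the independence theorem, specialised to what is needed here.

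What each route buys:
\begin{itemize}
\item The paper's route is shorter given the citations. It also first produces an intermediate formula valid for every convex body $K$ with $0\in\operatorname{int}K$.
\item Your route is self-contained and gives the explicit constant $C_{p,q,n}=Z^n/(2c_{p,q,n})$.
\item Your route also makes transparent that the weight $h_K^{1-p}$ only contributes the factor $|g|^{p-1}$. This is because $h_{B_q^n}$ evaluated at the gradient equals $\langle x,g(x)\rangle=\|x\|_q=1$ on the boundary, by Euler's identity.
\end{itemize}

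There is one notational slip to fix. As a function on $\R^n$, $\nabla\|x\|_q$ is $0$-homogeneous, not homogeneous of degree $q-1$. So for the claimed homogeneity of degree $p(q-1)$, and for identifying $\int_{\R^n}|\langle a,g\rangle|^pe^{-\|x\|_q^q}\,dx$ with $Z^n\E|\sum_i a_iY_i^{\langle q-1\rangle}|^p$, you must define
\[
g(x)=(\sgn(x_i)|x_i|^{q-1})_i=\tfrac1q\nabla\|x\|_q^q
\]
on all of $\R^n$. This agrees with $\nabla\|x\|_q$ on $\partial B_q^n$, so nothing else in your argument changes.
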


We defer the proof of Proposition \ref{prop:Lp-projection-formula} to Section \ref{sec:3}.

The stability estimates of \cite{jakimiuk2026stability} can be stated as stability estimates
for ratios of support functions of the form
\[
\frac{h_{\Pi B_q^n}(a)}
     {h_{\Pi B_q^n}(a_{\mathrm{ext}})},
\]
where $a_{\mathrm{ext}}$ denotes the relevant extremal direction. This suggests a natural generalization, simply by replacing $\Pi B_q^n$ with $\Pi_pB_q^n$. 

\begin{theorem}\label{thm:lp-projection-stability}
Let $n\geq2$, $1\leq p<\infty$, $p\neq2$, $q = 1$ or\, $2 < q \le \infty$, and $a\in \Sp^{n-1}$.  After changing signs and permuting coordinates,
assume that $a_1\geq a_2\geq\cdots\geq a_n\geq0.$

For $q = \infty$, if $1\leq p<2$, let \[
c_{p,\infty}
=
2^{-p/2}\min\left\{
\frac{1-2^{p/2-1}}{2p},
2^{\frac{2-p}{2p}}-1
\right\}.
\]
Then,
\begin{equation}\label{eq:cube-diagonal-stability-below-two}
\frac{h_{\Pi_pB_{\infty}^n}(a)}
     {h_{\Pi_pB_{\infty}^n}\left(a_{(n)}\right)}
\leq
1-\frac{2-p}{4pn}
\sum_{i,j=1}^n
\bigl(a_i-a_j\bigr)^2,
\end{equation}
and
\begin{equation}\label{eq:cube-coordinate-stability-below-two}
\frac{h_{\Pi_pB_{\infty}^n}(a)}
     {h_{\Pi_pB_{\infty}^n}\left(a_{(1)}\right)}
\geq
1+c_{p, \infty}\|a-a_{(1)}\|_2^p.
\end{equation}
If $p>2$, then
\begin{equation}\label{eq:cube-diagonal-stability-above-two}
\frac{h_{\Pi_pB_{\infty}^n}(a)}
     {h_{\Pi_pB_{\infty}^n}\left(a_{(n)}\right)}
\geq
1+\frac{p-2}{4pn}
\sum_{i,j=1}^n
\bigl(a_i-a_j\bigr)^2,
\end{equation}
and
\begin{equation}\label{eq:cube-coordinate-stability-above-two}
\frac{h_{\Pi_pB_{\infty}^n}(a)}
     {h_{\Pi_pB_{\infty}^n}\left(a_{(1)}\right)}
\leq
1-\frac{p-2}{2p}\|a-a_{(1)}\|_2^2.
\end{equation}

For $q = 1$, let
\[
C_0=\frac{7+\sqrt{17}}{2}
\qquad\text{and}\qquad
c_{p, 1}=\frac{|p-2|}{4pC_0}
=\frac{|p-2|}{2p(7+\sqrt{17})}.
\]
If $1\leq p<2$, then
\begin{equation}\label{eq:cross-coordinate-stability-below-two}
\frac{h_{\Pi_pB_1^n}(a)}
     {h_{\Pi_pB_1^n}\left(a_{(1)}\right)}
\leq
1-c_{p, 1}\|a-a_{(1)}\|_2^2.
\end{equation}
If $p>2$, then
\begin{equation}\label{eq:cross-coordinate-stability-above-two}
\frac{h_{\Pi_pB_1^n}(a)}
     {h_{\Pi_pB_1^n}\left(a_{(1)}\right)}
\geq
1+c_{p, 1}\|a-a_{(1)}\|_2^2.
\end{equation}

For $2 < q < \infty$ there exists a positive constant $c_{p, q}$, depending only on $p$ and $q$, such that if $1 \le p < 2$, then
\begin{align}\label{eq:q-above-two-coordinate-stability-below-two}
    \frac{h_{\Pi_pB_q^n}(a)}{h_{\Pi_pB_q^n}\left(a_{(1)}\right)} \ge 1 + c_{p, q}\left|a - a_{(1)}\right|^2,
\end{align}
and
\begin{align}\label{eq:q-above-two-diagonal-stability-below-two}
    \frac{h_{\Pi_pB_q^n}(a)}{h_{\Pi_pB_q^n}\left(a_{(n)}\right)} \le 1 - c_{p, q}\sum_{i=1}^n\left(a_i^2 - \frac 1 n\right)^2.
\end{align}
If $p > 2$, then
\begin{align}\label{eq:q-above-two-coordinate-stability-above-two}
    \frac{h_{\Pi_pB_q^n}(a)}{h_{\Pi_pB_q^n}\left(a_{(1)}\right)} \le 1 - c_{p, q}\left|a - a_{(1)}\right|^2,
\end{align}
and
\begin{align}\label{eq:q-above-two-diagonal-stability-above-two}
    \frac{h_{\Pi_pB_q^n}(a)}{h_{\Pi_pB_q^n}\left(a_{(n)}\right)} \ge 1 + c_{p, q}\sum_{i=1}^n\left(a_i^2 - \frac 1 n\right)^2.
\end{align}
\end{theorem}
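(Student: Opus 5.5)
The plan is to treat the three families $q=\infty$, $q=1$ and $2<q<\infty$ separately, each time starting from an explicit formula for $h^p_{\Pi_p B_q^n}$. These are \eqref{eq:lp-projection-formulas}, \eqref{eq:lp-cross-polytope-formula} and Proposition~\ref{prop:Lp-projection-formula}. The ratios $h(a)/h(a_{\mathrm{ext}})$ are $p$-th roots of ratios of the quantities $\sum|a_i|^p$, $\E|\sum a_i\varepsilon_i|^p$ and $\E|\sum a_iX_i|^p$, so the normalizing constants cancel. Throughout I will use the elementary passage $(1\pm u)^{1/p}\lessgtr 1\pm u/p$ and related tangent-line bounds to convert a stability estimate for the $p$-th power into one for $h$ itself.

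For $q=\infty$ everything reduces to comparing $\|a\|_p$ with $\|a\|_2=1$. The reference values are $\|a_{(n)}\|_p^p=n^{1-p/2}$ and $\|a_{(1)}\|_p=1$.
\begin{itemize}
\item For the diagonal estimates \eqref{eq:cube-diagonal-stability-below-two} and \eqref{eq:cube-diagonal-stability-above-two}, I will write $a_i^2=\frac1n+\delta_i$ and use the strict concavity (for $p<2$) or convexity (for $p>2$) of $t\mapsto t^{p/2}$, with a quantitative second-order Taylor remainder bounded on $[0,1]$. I then need the identity $\sum_{i,j}(a_i-a_j)^2=2n-2(\sum a_i)^2$ together with $\sum\delta_i=0$. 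The most natural route is to prove $\|a\|_p\le n^{1/p-1/2}\bigl(1-\frac{2-p}{2p}(1-\langle a,a_{(n)}\rangle^2)\bigr)$ directly via Hölder, or by interpolating $\|a\|_p$ between $\|a\|_1$ and $\|a\|_2$. Here $\|a\|_1=\sqrt n\,\langle a,a_{(n)}\rangle$ controls the distance to $a_{(n)}$.
\item For the coordinate estimates \eqref{eq:cube-coordinate-stability-below-two} and \eqref{eq:cube-coordinate-stability-above-two}, set $\|a-a_{(1)}\|_2^2=2(1-a_1)$. For $p>2$ I will bound $\sum a_i^p\le a_1^p+(1-a_1^2)^{p/2}$ and compare with $1-\frac{p-2}{2}\cdot 2(1-a_1)$ by calculus in one variable. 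For $p<2$ I will use $\sum_{i\ge2}a_i^p\ge(1-a_1^2)^{p/2}$ and split into $a_1$ near $1$ and $a_1$ bounded away from $1$. The two entries of the minimum in $c_{p,\infty}$ suggest exactly this dichotomy, with threshold $a_1^2=1/2$.
\end{itemize}

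For $q=1$ I condition on $\varepsilon_1$ and write $\E|a_1+S|^p$, where $S=\sum_{i\ge2}a_i\varepsilon_i$ is symmetric with $\E S^2=1-a_1^2$. The function $\phi(s)=\frac12(|a_1+s|^p+|a_1-s|^p)$ should be compared with the quadratic $a_1^p+\frac{p(p-1)}{2}a_1^{p-2}s^2$ or with $(a_1^2+s^2)^{p/2}$. This is a stability version of the majorization inequality for Gaussian mixtures: for $p<2$, $\E|\sum a_i\varepsilon_i|^p\le(\sum a_i^2)^{p/2}$, with a deficit of order $\E S^2\cdot\min(1,\cdot)$. The constant $C_0=(7+\sqrt{17})/2$ suggests a case split according to whether $a_1^2$ exceeds a threshold, which is a root of a quadratic, relative to $1-a_1^2$. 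In the regime where $a_1$ is small, the distance $\|a-a_{(1)}\|_2^2\le2$ is bounded, and it suffices to exhibit a uniform gap using e.g.\ Szarek/Haagerup-type bounds. Both signs $p<2$ and $p>2$ are handled symmetrically, the factor being $|p-2|$.

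For $2<q<\infty$ the variables $X_i$ from Proposition~\ref{prop:Lp-projection-formula} have density $\propto|t|^{\frac{2-q}{q-1}}e^{-|t|^{q/(q-1)}}$, with exponent $q/(q-1)<2$. This should make them Gaussian mixtures, i.e.\ $X=RG$ with $R$ independent of a standard Gaussian $G$. Then $\E|\sum a_iX_i|^p=\gamma_p\E(\sum a_i^2R_i^2)^{p/2}$. This is a Schur-concave or Schur-convex function of $(a_i^2)$ according to whether $p<2$ or $p>2$, which is why the extremizers are $a_{(1)}$ and $a_{(n)}$. I will prove a quantitative Schur estimate for $F(b)=\E(\sum b_iR_i^2)^{p/2}$ on the simplex, by a second-order expansion along the segment from $b$ to the extremal point. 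The Hessian's strict definiteness on $\{\sum b_i=0\}$ gives deficits $\sum(b_i-\frac1n)^2$ and $1-b_1\asymp|a-a_{(1)}|^2$, with constants depending only on $p,q$ via moments of $R$.

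\textbf{Main obstacle.} I expect the hardest step to be this stability refinement of Gaussian-mixture majorization, uniformly in $n$. The Hessian bound degenerates near the boundary of the simplex, and near $a_{(1)}$ one must control $\E(b_1R_1^2+\dots)^{p/2-2}$, which for $p<4$ involves possibly negative moments of $R$. I would first try to reduce to two variables by a Schur-type smoothing that merges coordinates while monotonically tracking the deficit, and then verify a two-variable inequality directly.
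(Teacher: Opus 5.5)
There are genuine gaps. Your cube \emph{diagonal} argument is the paper's interpolation argument; note that for $p>2$ one interpolates $\|a\|_2$ between $\|a\|_1$ and $\|a\|_p$, not $\|a\|_p$ between $\|a\|_1$ and $\|a\|_2$.

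\textbf{Cube, coordinate estimates.} The bounds $\sum_{i\ge2}a_i^p\lessgtr(1-a_1^2)^{p/2}$ discard the information $a_i\le a_1$. They break down when $a_1$ is small, which large $n$ allows.
\begin{itemize}
\item For $p>2$ and $a=a_{(n)}$, your bound only gives $\|a\|_p\le\bigl(n^{-p/2}+(1-1/n)^{p/2}\bigr)^{1/p}\to1$. The right-hand side of \eqref{eq:cube-coordinate-stability-above-two} tends to $2/p<1$, so no one-variable calculus can close this.
\item Your linearized target $\sum_ia_i^p\le1-(p-2)(1-a_1)$ is false outright: for $p=6$ and $a=(1/\sqrt2,1/\sqrt2)$ the left side is $1/4$ and the right side is negative.
\item For $p<2$, your bound is what the paper uses when $a_1\ge1/\sqrt2$. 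Below that threshold, $a_1^p+(1-a_1^2)^{p/2}\to1$ as $a_1\to0$, so you get no uniform gap, while $\|a-a_{(1)}\|_2^p$ stays of order one.
\end{itemize}
The missing observation is $\|a\|_p^p=\sum_ia_i^2a_i^{p-2}$, which is $\ge a_1^{p-2}$ for $p<2$ and $\le a_1^{p-2}$ for $p>2$. For $p<2$ and $a_1<1/\sqrt2$ it gives $\|a\|_p\ge2^{(2-p)/(2p)}$, which is the second entry of $c_{p,\infty}$. For $p>2$ it gives $\|a\|_p\le a_1^{(p-2)/p}\le1-\frac{p-2}{p}(1-a_1)$ by concavity, with no case split.

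\textbf{Cross-polytope.} For $q=1$ the decisive ingredient is missing. Write $S=\sum_ia_i\e_i$. When no coordinate dominates, you need a gap uniform in $n$: $\E|S|^p\le1-c$ for $p<2$, resp.\ $\E|S|^p\ge1+c$ for $p>2$.
\begin{itemize}
\item The Szarek/Haagerup bounds you propose point the other way. They give $\E|S|^p\ge A_p^p$ for $p<2$, and $\E|S|^p\le\E|G|^p$ for $p>2$, with $G$ standard Gaussian.
\item The deficit from comparing $\phi(s)$ with $(a_1^2+s^2)^{p/2}$ carries a factor $a_1^2$, so it vanishes exactly in this regime.
\end{itemize}
What is needed is an FKN-type robustness statement: $|S|$ is far from constant unless one coefficient carries almost all the mass. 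The paper imports it from \cite[Theorem~1.3]{jendrej2015extensions}: $\sum_{i\ne k}a_i^2\le C_0\operatorname{Var}(|S|)$ for some $k$. That theorem is the source of $C_0=(7+\sqrt{17})/2$, not a threshold in a case split.

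With it, the mechanism you already use for the cube diagonal finishes the proof without cases. Let $m=\E|S|$. Interpolation gives $\|S\|_p\le m^{(2-p)/p}$, resp.\ $\|S\|_p\ge m^{-(p-2)/p}$. Combined with $1-m\ge\frac12\operatorname{Var}(|S|)$ and $\|a-a_{(1)}\|_2^2\le2(1-a_1^2)$, this yields exactly $c_{p,1}=|p-2|/(4pC_0)$. Without this input your outline neither closes the small-$a_1$ regime nor reaches the stated constant. Also, Rademacher sums are not Gaussian mixtures; the bound $\E|S|^p\le1$ for $p<2$ is just H\"older.

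\textbf{The case $2<q<\infty$.} Your outline matches the paper's: Gaussian-mixture representation, then $T$-transforms down to two coordinates. But the two-variable inequality, which is the heart of the matter, is left open. The paper's Lemma~\ref{lem:gaussian-mixtures-auxiliary} resolves it as follows.
\begin{itemize}
\item Set $R=(Y_1^2+Y_2^2)/2$ and $D=Y_1^2-Y_2^2$. Since $D$ is symmetric and $R$ is independent of $\sgn D$, the difference equals exactly a double integral of $\varphi''(s+\sigma^2R+\sigma^2|D|w)$ over $|w|\le u$.
\item Keep only the half $w\le0$ (resp.\ $w\ge0$). There, monotonicity of $\varphi''$ gives the lower bound $\varphi''(s+\sigma^2R)$.
\item Restrict to $\{|D|\ge\delta,\,R\le\eta\}$ (resp.\ $R\ge\eta$). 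Then bound $\E\varphi''(S+\eta\sigma^2)$ below uniformly in $n$, with $S=\sum_{i\ge3}a_i^2Y_i^2$, by Markov for $p\le4$ or by Paley--Zygmund for $p>4$.
\end{itemize}
This avoids the negative moments you worry about. The coordinate estimates then need no separate expansion at $a_{(1)}$: take $e_1$ as the majorizing vector and use $1-\sum_ia_i^4\ge1-a_1^2\ge1-a_1$.
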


In view of formulas \eqref{eq:lp-cross-polytope-formula} and \eqref{eq:Lp-projection-probabilistic-formula} it follows that $h_{\Pi_pB_q^n}(a)$ is in most cases a $p$-th absolute moment of a weighted sum of certain symmetric independent random variables. Thus some of our stability estimates boil down to the stability of classical inequalities comparing moments, or to their reversals for such sums, i.e. Khintchine-type inequalities. The research on stability of Khintchine-type inequalities is a relatively new topic of research, see \cite{de2016robust, jakimiuk2025khintchine, baranski2025optimal, gao2026fourth} for the results on Rademacher random variables and \cite{jakimiuk2026spheres, chavez2026stability, jakimiuk2026stability} for other distributions. 

In particular, the following corollary is an immediate consequence of \cite[Theorem 2]{jakimiuk2025khintchine} for $p > 3$ and \cite[Theorem 1.4]{gao2026fourth} for $p = 3$. We decided not to include it in the formulation of Theorem \ref{thm:lp-projection-stability} since the whole work of proving it is done elsewhere and because it does not cover the interval $2 < p < 3$.

\begin{corollary}\label{cor:cross-diagonal-stability-above-three}
    For $p \ge 3$ there exists a constant $C_{p, 1} > 0$ such that for any $a \in \Sp^{n-1}$ the stability estimate
    \begin{align*}
        \frac{h_{\Pi_pB_1^n(a)}}{h_{\Pi_pB_1^n\left(a_{(n)}\right)}} \le 1 - C_{p, 1}\sum_{i=1}^n\left(a_i^2 - \frac 1 n\right)^2
    \end{align*}
    holds.
\end{corollary}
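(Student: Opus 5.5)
We reduce the statement to a known stability estimate for Khintchine inequalities via the cross-polytope formula \eqref{eq:lp-cross-polytope-formula}. By that formula, for every $a\in\Sp^{n-1}$,
\[
\frac{h_{\Pi_pB_1^n}(a)}{h_{\Pi_pB_1^n}\left(a_{(n)}\right)}
=
\left(\frac{\E\left|\sum_{i=1}^n a_i\varepsilon_i\right|^p}{\E\left|\sum_{i=1}^n \frac{1}{\sqrt n}\varepsilon_i\right|^p}\right)^{1/p},
\]
because the constant $\frac{2^{n-1}}{(n-1)!}$ cancels. It therefore suffices to prove a bound of the form
\[
\E\left|\sum a_i\varepsilon_i\right|^p\le \E\left|\tfrac{1}{\sqrt n}\sum \varepsilon_i\right|^p\Bigl(1-c\sum_i\bigl(a_i^2-\tfrac1n\bigr)^2\Bigr),
\]
and then take $p$-th roots. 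Since $\sum_i(a_i^2-1/n)^2\le 1$, the inequality $(1-x)^{1/p}\le 1-x/p$ for $x\in[0,1]$ turns this into the claim with $C_{p,1}=c/p$.

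For $p>3$ we invoke \cite[Theorem 2]{jakimiuk2025khintchine}, and for $p=3$ we invoke \cite[Theorem 1.4]{gao2026fourth}. Both are robust versions of the statement that, for $p\ge3$, the moment $\E|\sum a_i\varepsilon_i|^p$ on the unit sphere is maximized at the diagonal $a_{(n)}$. The main point to check is that their deficits have the form above, namely a relative deficit bounded below by a constant times $\sum_i(a_i^2-1/n)^2$.

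The main obstacle is normalization. The cited results may state the deficit additively, or compare against the Gaussian moment $\E|G|^p$ (the supremum as $n\to\infty$) rather than against $a_{(n)}$ at fixed $n$. In the additive case we divide by $\E|n^{-1/2}\sum\varepsilon_i|^p$, which lies between $1$ and $\E|G|^p$ (the lower bound by Jensen, the upper by the known Khintchine bound for $p\ge 3$); this gives a relative deficit with a constant depending only on $p$.

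If the cited statements only compare $a$ with $a_{(n)}$ through a distance such as $\|a-a_{(n)}\|$, we convert using the following elementary fact. Assume $a_1\ge\dots\ge a_n\ge 0$, as we may after changing signs and permuting coordinates. Then
\[
\sum_i\bigl(a_i^2-\tfrac1n\bigr)^2=\sum_i\bigl(a_i-\tfrac1{\sqrt n}\bigr)^2\bigl(a_i+\tfrac1{\sqrt n}\bigr)^2\le 4\,\|a-a_{(n)}\|_2^2 .
\]
Hence a deficit bounded below by $\|a-a_{(n)}\|_2^2$ dominates the quantity in the corollary, up to the factor $4$.
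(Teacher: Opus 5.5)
Your reduction is correct and is the paper's argument: the paper records this corollary as an immediate consequence of the cross-polytope formula \eqref{eq:lp-cross-polytope-formula} combined with \cite[Theorem 2]{jakimiuk2025khintchine} for $p>3$ and \cite[Theorem 1.4]{gao2026fourth} for $p=3$, exactly as you propose. One caveat: your normalization paragraph does not rescue a deficit that is only measured against $\E|G|^p$, because the gap between $\E|G|^p$ and $\E|n^{-1/2}\sum_i\varepsilon_i|^p$ is of order $1/n$, and subtracting it destroys the bound for spread-out vectors with $\sum_i a_i^4=O(1/n)$; so the argument genuinely relies on the cited theorems being fixed-$n$ comparisons with $a_{(n)}$, which is how the paper uses them.
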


We want to emphasize that \eqref{eq:cross-coordinate-stability-below-two} and \eqref{eq:cross-coordinate-stability-above-two} yield coordinate stability estimates for the classical moment comparison inequalities for Rademacher sums. Indeed, for $p\geq1$, using the elementary inequalities $(1+x)^p \geq1+xp$, for $x\geq 0$ and $(1-x)^p \leq1-x$, for $0<x<1$, we obtain the following reformulation of \eqref{eq:cross-coordinate-stability-below-two} and \eqref{eq:cross-coordinate-stability-above-two}.
\begin{corollary}\label{cor:coordinate-rademacher-stability}
Let \(a\in \Sp^{n-1}\) with $a_1 \ge a_2 \ge \ldots \ge a_n \ge 0$ and $c_{p, 1}>0$ be the constant from Theorem \ref{thm:lp-projection-stability}. For \(1\leq p<2\),
$$
\E\left|\sum_{j=1}^n a_j\e_j\right|^p
\leq
1-c_{p, 1}\|a-a_{(1)}\|_2^2.
$$
For \(p>2\),
$$
\E\left|\sum_{j=1}^n a_j\e_j\right|^p
\geq
1+pc_{p, 1}\|a-a_{(1)}\|_2^2.
$$
\end{corollary}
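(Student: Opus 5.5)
The corollary follows from \eqref{eq:cross-coordinate-stability-below-two} and \eqref{eq:cross-coordinate-stability-above-two} once the support-function ratio is rewritten as a moment. By \eqref{eq:lp-cross-polytope-formula},
\[
\frac{h_{\Pi_pB_1^n}(a)}{h_{\Pi_pB_1^n}\left(a_{(1)}\right)}
=
\left(\frac{\E\left|\sum_{j=1}^n a_j\e_j\right|^p}{\E\left|\e_1\right|^p}\right)^{1/p}
=
\left(\E\left|\sum_{j=1}^n a_j\e_j\right|^p\right)^{1/p}.
\]
The constant $2^{n-1}/(n-1)!$ cancels, and $a_{(1)}=e_1$ gives $\E|\e_1|^p=1$. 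We write $M_p(a)=\E|\sum_j a_j\e_j|^p$ and $\delta=\|a-a_{(1)}\|_2^2$. Then Theorem \ref{thm:lp-projection-stability} gives $M_p(a)^{1/p}\le 1-c_{p,1}\delta$ for $1\le p<2$, and $M_p(a)^{1/p}\ge 1+c_{p,1}\delta$ for $p>2$. It remains to raise both sides to the power $p$ and linearize.

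For $1\le p<2$, first check that $0\le c_{p,1}\delta<1$. Since $a,e_1\in\Sp^{n-1}$ and $a$ has nonnegative entries, $\delta=2-2a_1\le 2$. Moreover $c_{p,1}\le \frac{1}{4C_0}<\frac{1}{2}$, so $c_{p,1}\delta<1$ and the right-hand side $1-c_{p,1}\delta$ lies in $(0,1]$; in particular the power $p$ may be taken. Set $x=c_{p,1}\delta\in[0,1)$. Raising to the power $p\ge1$ gives $M_p(a)\le (1-x)^p\le 1-x$, because $t^p\le t$ for $t\in[0,1]$. This is the first inequality, with constant $c_{p,1}$ unchanged. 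The endpoint $x=0$ is trivial, so using $0<x<1$ as in the stated elementary inequality is harmless.

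For $p>2$, set $x=c_{p,1}\delta\ge0$. Raising to the power $p$ gives $M_p(a)\ge (1+x)^p\ge 1+px$ by Bernoulli's inequality, which is exactly $1+pc_{p,1}\delta$.

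No step is a real obstacle. The only care needed is the range check $c_{p,1}\delta<1$, which guarantees the base in the first case is in $[0,1]$ so that $t^p\le t$ applies. It follows from $\|a-e_1\|_2^2\le 2$ together with the explicit smallness of $c_{p,1}$.
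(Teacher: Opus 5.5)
Your proof is correct and matches the paper's argument: you rewrite the support-function ratio via \eqref{eq:lp-cross-polytope-formula} as $\bigl(\E\bigl|\sum_{j} a_j\e_j\bigr|^p\bigr)^{1/p}$, raise the two inequalities of Theorem~\ref{thm:lp-projection-stability} to the $p$-th power, and then apply $(1-x)^p\le 1-x$ and Bernoulli's inequality $(1+x)^p\ge 1+px$. Your check that $c_{p,1}\|a-a_{(1)}\|_2^2<1$ is a detail the paper leaves implicit. It is in fact automatic, because $1-c_{p,1}\|a-a_{(1)}\|_2^2$ is an upper bound for the positive quantity $\bigl(\E\bigl|\sum_j a_j\e_j\bigr|^p\bigr)^{1/p}$.
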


It turns out that for $q > 2$ the random variables from Proposition \ref{prop:Lp-projection-formula} are so-called \textit{Gaussian mixtures}, as observed by Eskenazis, Nayar and Tkocz in \cite{eskenazis2018gaussian}. We recall the definition.

\begin{defn}\label{def:gaussian-mixtures}
    A random variable $X$ is called a Gaussian mixture if there exists a positive random variable $Y$ and a standard Gaussian random variable $Z$, independent of $Y$, such that $X$ has the same distribution as the product $YZ$.
\end{defn}

In Section \ref{sec:mixtures} we recall the majorization inequality of Eskenazis, Nayar and Tkocz for Gaussian mixtures and prove its stability refinement, which will be one of the crucial tools in the proof of Theorem \ref{thm:lp-projection-stability}.

\subsection{Monotonicity of support ratios for $L_p$-projection bodies}

It is natural to ask whether the monotonicity of normalized
hyperplane projection volumes established by Barthe and Naor
\cite[Theorem~3]{barthe2002hyperplane} extends to support ratios of
$L_p$-projection bodies.

\begin{theorem}\label{thm:monotonicity-normalized-lp-projections}
Let $n\geq 2$, $1\leq p<\infty$, and $a\in \Sp^{n-1}$.
If $1\leq q_1<q_2\leq\infty$, then
\[
\frac{h_{\Pi_p B_{q_1}^n}(a)}
     {h_{\Pi_p B_{q_1}^n}(e_1)}
\leq
\frac{h_{\Pi_p B_{q_2}^n}(a)}
     {h_{\Pi_p B_{q_2}^n}(e_1)},
\qquad 1\leq p<2,
\]
whereas
\[
\frac{h_{\Pi_p B_{q_1}^n}(a)}
     {h_{\Pi_p B_{q_1}^n}(e_1)}
\geq
\frac{h_{\Pi_p B_{q_2}^n}(a)}
     {h_{\Pi_p B_{q_2}^n}(e_1)},
\qquad p>2.
\]
For $p=2$, both ratios are equal to $1$.
\end{theorem}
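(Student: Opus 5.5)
Via Proposition \ref{prop:Lp-projection-formula} and the formulas \eqref{eq:lp-projection-formulas}, \eqref{eq:lp-cross-polytope-formula}, we write, for every $1\le q\le\infty$,
\[
\frac{h_{\Pi_p B_q^n}^p(a)}{h_{\Pi_p B_q^n}^p(e_1)}=\frac{\E\left|\sum_{i=1}^n a_iX_i^{(q)}\right|^p}{\E|X_1^{(q)}|^p},
\]
where $X^{(q)}$ has density proportional to $|t|^{\frac{2-q}{q-1}}e^{-|t|^{q/(q-1)}}$ for $1<q<\infty$. For $q=1$ we take $X^{(1)}=\varepsilon$ (Rademacher). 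For $q=\infty$ the sum $\sum|a_i|^p$ is the $p$-th moment of $\sum a_i X_i$ only in a degenerate sense, so we treat that endpoint separately. Set $\Phi_q(a)=\E|\sum a_iX_i^{(q)}|^p/\E|X_1^{(q)}|^p$.

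For $p=2$ both ratios equal $\sum a_i^2=1$. The main idea is a \emph{chain of Gaussian mixture representations}. Writing $s=q/(q-1)$, the conjugate exponent, the density of $X^{(q)}$ is that of $\pm V^{1/s}$ with $V\sim\Gamma(1/s)$, up to scaling; that is, $X^{(q)}\stackrel{d}{=}\varepsilon\, W_s$. As $q$ increases, $s$ decreases from $\infty$ to $1$. I would show that for $q_1<q_2$ there is a representation $X^{(q_2)}\stackrel{d}{=} R\,X^{(q_1)}$ with $R\ge0$ independent of $X^{(q_1)}$. For $q_1=1$ this is trivial because everything is symmetric. In general it amounts to showing that $\log$ of $W_{s_2}$ has the law of $\log W_{s_1}$ convolved with an independent positive-multiplier law, i.e.\ that the Mellin transform ratio $\E W_{s_2}^{z}/\E W_{s_1}^{z}=\frac{\Gamma((z+1)/s_2)/\Gamma(1/s_2)}{\Gamma((z+1)/s_1)/\Gamma(1/s_1)}$ is a Mellin transform of a positive random variable. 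This is the exact analogue of the step in Barthe and Naor \cite{barthe2002hyperplane}, and it is the main obstacle. I would try to invoke known results on ratios of Gamma functions (Gamma-type multiplicative infinite divisibility, as used in \cite{eskenazis2018gaussian} for $q>2$) to verify it.

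Granted the representation, we get
\[
\E\left|\sum a_iX_i^{(q_2)}\right|^p=\E_R\,\E\left|\sum a_iR_iX_i^{(q_1)}\right|^p .
\]
Conditionally on $R$, the vector $b=(a_iR_i)$ gives $\E|\sum b_iX_i^{(q_1)}|^p=\Phi_{q_1}(b/|b|)\,|b|^p\,\E|X^{(q_1)}|^p$. For $p<2$ the ratio of interest is the one at $e_1$, so one wants $\E|\sum a_iR_iX_i|^p\ge \Phi_{q_1}(a)\,\E|X^{(q_1)}|^p\,\E R^p$. This does not follow immediately, so instead I would use the cleaner route of Barthe--Naor: represent both as averages in a form where the comparison reduces to Jensen/Hölder on $t\mapsto t^{p/2}$. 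Concretely, when $X^{(q_1)}$ is itself a Gaussian mixture $Y Z$ (the case $q_1\ge2$), we have $\E|\sum a_iX_i|^p=\E|Z|^p\,\E(\sum a_i^2Y_i^2)^{p/2}$. The comparison between $q_1$ and $q_2$ then becomes a comparison of $\E(\sum a_i^2 Y_i^2R_i^2)^{p/2}/\E Y^pR^p$ versus $\E(\sum a_i^2Y_i^2)^{p/2}/\E Y^p$. This follows from concavity of $t\mapsto t^{p/2}$ for $p<2$ (convexity for $p>2$) applied via the majorization/Schur-type argument of Section \ref{sec:mixtures}.

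For the remaining ranges I would argue as follows. For $q_1\le 2$ I would take $q_1$ at the endpoints, and for intermediate values use the multiplicative chain directly on $\E|\cdot|^p$ with conditional Hölder. The endpoint $q_2=\infty$ follows by continuity in $q$ (support functions converge as $B_q^n\to B_\infty^n$ in Hausdorff distance, and $L_p$ surface area measures converge weakly), or directly from \eqref{eq:lp-projection-formulas}: since $\Phi_q(a)\to\sum|a_i|^p$ as $q\to\infty$, the $q_2=\infty$ case is a limit of the finite case. Finally, taking $p$-th roots gives the stated inequalities.
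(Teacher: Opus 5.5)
Your reduction to $\Phi_q$, the case $p=2$, the endpoint $q_2=\infty$ by continuity, and the case $q_1\ge 2$ are fine. In that last case $X^{(q_1)}=YZ$ is a Gaussian mixture. Conditionally on $Y$, Jensen for $t\mapsto t^{p/2}$ with weights $a_i^2Y_i^2/\sum_j a_j^2Y_j^2$ gives $\E(\sum_i a_i^2R_i^2Y_i^2)^{p/2}\ge \E R^p\,\E(\sum_i a_i^2Y_i^2)^{p/2}$ for $p<2$, reversed for $p>2$.

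The genuine gap is the range $1\le q_1<2$. Apart from the easy sandwich $\Phi_1\le\Phi_2\equiv 1\le\Phi_{q_2}$ for $q_2\ge 2$ (H\"older plus your Jensen argument), nothing in the proposal covers $1\le q_1<q_2<2$, which contains the Barthe--Naor comparison of $B_1^n$ with $B_q^n$ for $1<q<2$. Nor does it cover $1<q_1<2\le q_2$. There $X^{(q_1)}$ is not a Gaussian mixture: its density vanishes at $0$, and for $q_1=1$ it is Rademacher. ``Endpoints'' plus ``conditional H\"older'' do not give the inequality you correctly identify, $\E|\sum_i a_iR_iX_i^{(q_1)}|^p\ge \E R^p\,\E|\sum_i a_iX_i^{(q_1)}|^p$.

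This is not a H\"older-type fact. Normalise $\E R^p=1$ and swap a single summand against an independent symmetric $S$. Already for $R\in\{0,2^{1/p}\}$ and fixed $|X|=t^{1/p}$, the inequality reads $\frac12H(0)+\frac12H(2t)\ge H(t)$ with $H(t)=\E|S+\lambda t^{1/p}|^p$, and it reverses for $p>2$. The missing ingredient is exactly that $H$ is convex for $1\le p\le 2$ and concave for $p\ge 2$, for every symmetric $S$ (\cite[Lemma~12(i)]{eskenazis2018sharp}). With it one writes $\E|S+\lambda Y_q|^p=\E H(T_q)$ and replaces the summands one at a time, which is what the paper does.

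The representation $X^{(q_2)}\stackrel{d}{=}RX^{(q_1)}$ is also left open, and your Mellin transform is for the density $e^{-|t|^{s}}$. You dropped the factor $|t|^{s-2}$: in fact $|X^{(q)}|^{q^*}\sim\Gamma(1/q)$, so $\E|X^{(q)}|^z=\Gamma\bigl((1+(q-1)z)/q\bigr)/\Gamma(1/q)$. Barthe and Naor do not use such a step either; they compare densities through their sign changes, as the paper does in Lemma~\ref{lem:projection-variables-convex-order}.

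The representation is nevertheless true and provable. The logarithm of a $\Gamma(\alpha)$ variable is infinitely divisible with L\'evy density $e^{-\alpha t}/(t(1-e^{-t}))$ at jumps $-t$. Hence $\log|X^{(q)}|$ has L\'evy density $e^{t}/(t(e^{q^*t}-1))$, which increases with $q$, and the nonnegative difference defines $\log R$. Then $T_{q_2}\stackrel{d}{=}WT_{q_1}$ with $W=R^p/\E R^p$ independent and of mean one, so Jensen gives $T_{q_1}\le_{\mathrm{cx}}T_{q_2}$. This route could replace the paper's density-crossing lemma, but it does not remove the need for the convexity of $H$.
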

Taking $p=1$ in Theorem~\ref{thm:monotonicity-normalized-lp-projections},
we recover the monotonicity theorem.

\begin{corollary}[Barthe--Naor]\label{cor:barthe-naor-monotonicity}
Let $n\geq2$ and $a\in \Sp^{n-1}$. Then the function
\[
q\longmapsto
\frac{\operatorname{vol}_{n-1}(\operatorname{Proj}_{a^\perp}B_q^n)}
     {\operatorname{vol}_{n-1}(B_q^{n-1})}
\]
is nondecreasing on $[1,\infty]$.
\end{corollary}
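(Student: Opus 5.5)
This is the case $p=1$ of Theorem~\ref{thm:monotonicity-normalized-lp-projections}, so the plan is to identify both sides with the normalized projection volumes and then read off the monotonicity.

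\textbf{Step 1: identify the support function with projection volume.} For $p=1$ the measure $S_1(K,\cdot)$ equals $S_{n-1}(K,\cdot)$, so $\Pi_1 K=\Pi K$ with the normalization in \eqref{L_p proj body}. By Cauchy's projection formula,
\[
h_{\Pi K}(a)=\frac12\int_{\Sp^{n-1}}|\langle a,v\rangle|\,dS_{n-1}(K,v)=\vol_{n-1}(\operatorname{Proj}_{a^\perp}K),\qquad a\in\Sp^{n-1}.
\]
Hence
\[
\frac{h_{\Pi_1B_q^n}(a)}{h_{\Pi_1B_q^n}(e_1)}=\frac{\vol_{n-1}(\operatorname{Proj}_{a^\perp}B_q^n)}{\vol_{n-1}(\operatorname{Proj}_{e_1^\perp}B_q^n)}.
\]

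\textbf{Step 2: the coordinate projection.} Projecting $B_q^n$ onto $e_1^\perp$ drops the first coordinate. Setting $x_1=0$ shows the image contains $B_q^{n-1}$, and $\|(x_2,\dots,x_n)\|_q\le\|x\|_q$ shows it is contained in $B_q^{n-1}$. So the image is exactly $B_q^{n-1}$, and the denominator equals $\vol_{n-1}(B_q^{n-1})$ for every $q\in[1,\infty]$.

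\textbf{Step 3: apply the theorem.} Take $1\le q_1<q_2\le\infty$. Since $p=1<2$, the first inequality of Theorem~\ref{thm:monotonicity-normalized-lp-projections} applies and gives
\[
\frac{\vol_{n-1}(\operatorname{Proj}_{a^\perp}B_{q_1}^n)}{\vol_{n-1}(B_{q_1}^{n-1})}\le\frac{\vol_{n-1}(\operatorname{Proj}_{a^\perp}B_{q_2}^n)}{\vol_{n-1}(B_{q_2}^{n-1})}.
\]
This is exactly the claimed nondecreasing property on $[1,\infty]$.

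The endpoints need no separate argument, since the theorem is stated for the full range $1\le q_1<q_2\le\infty$. No step is a genuine obstacle. The only point requiring care is that the normalization of $\Pi_pK$ used in the paper reduces to Cauchy's formula with constant exactly $1$ when $p=1$. Even without this, any positive constant, independent of direction, would cancel in the ratio.
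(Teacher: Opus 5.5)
Your proposal is correct and is exactly the paper's argument: the paper obtains the corollary by setting $p=1$ in Theorem~\ref{thm:monotonicity-normalized-lp-projections}. You have also made explicit the two identifications the paper leaves implicit, namely $h_{\Pi_1 K}(a)=\vol_{n-1}(\operatorname{Proj}_{a^\perp}K)$ via $S_1=S_{n-1}$ and Cauchy's formula, and $\operatorname{Proj}_{e_1^\perp}B_q^n=B_q^{n-1}$.
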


\subsection{Organization of the paper} 

In Section 2 we prove the stability estimates for the regular simplex. In Section \ref{sec:3} we prove the probabilistic formula for $L_p$-projection bodies claimed in Subsection 1.3. In Section 4 we establish our results for Gaussian mixtures. In Section 5 we prove the stability estimates for $L_p$-projection bodies and, finally, in Section 6 we prove the generalization of Barthe's and Naor's monotonicity result to $L_p$-projection bodies. 

\section{Proof of Theorem \ref{thm:simplex-projection-stability}}
We begin with the lower stability estimate. Set
\[
S
=
\sum_{a_k>0}a_k
=
-\sum_{a_k<0}a_k
=
\frac12\sum_{k=1}^{n+1}|a_k|,
\]
and choose $i$ and $j$ so that
$a_i=\max_{1\leq k\leq n+1}a_k$ and
$a_j=\min_{1\leq k\leq n+1}a_k$, respectively. Notice that $a_i>0$
and $a_j<0$, since there must be at least one positive and one negative
coordinate.

By \eqref{eq:simplex-projection-formula},
\begin{equation}\label{eq:minimum-volume-ratio}
\frac{
\operatorname{vol}_{n-1}
\left(
\operatorname{Proj}_{a^\perp\cap\mathcal H}\Delta_n
\right)
}{
\operatorname{vol}_{n-1}
\left(
\operatorname{Proj}_{a_\ast^\perp\cap\mathcal H}\Delta_n
\right)
}
=
\sqrt2S.
\end{equation}

Since $a_i$ is the largest positive coordinate and $-a_j$ is the
largest absolute value among the negative coordinates,
\[
1
=
\sum_{a_k>0}a_k^2+
\sum_{a_k<0}a_k^2
\leq
a_iS-a_jS.
\]
Consequently,
\[
a_i-a_j\geq\frac1S.
\]
Moreover, $a_i\leq S$ and $-a_j\leq S$, so
\[
1\leq S(a_i-a_j)\leq2S^2.
\]
Thus, $S\geq1/\sqrt2$.

Since
\[
\sum_{k\neq i,j}|a_k|
=
2S-(a_i-a_j),
\]
and $S\geq1/\sqrt2$, we have
\[
\left|a_i-\frac1{\sqrt2}\right|
\leq
S-\frac1{\sqrt2}+S-a_i
\]
and
\[
\left|a_j+\frac1{\sqrt2}\right|
\leq
S-\frac1{\sqrt2}+S+a_j.
\]
The triangle inequality now gives
\begin{align*}
\min_{k\neq\ell}
\left\|
a-\frac{e_k-e_\ell}{\sqrt2}
\right\|_2
&\leq
\left\|
a-\frac{e_i-e_j}{\sqrt2}
\right\|_2\\
&\leq
\left|a_i-\frac1{\sqrt2}\right|
+
\left|a_j+\frac1{\sqrt2}\right|
+
\sum_{k\neq i,j}|a_k|\\
&\leq
2\left(S-\frac1{\sqrt2}\right)
+2\bigl(2S-(a_i-a_j)\bigr)\\
&\leq
2\left(S-\frac1{\sqrt2}\right)
+2\left(2S-\frac1S\right).
\end{align*}
Now
\[
2\left(S-\frac1{\sqrt2}\right)
=
\sqrt2(\sqrt2S-1),
\]
while
\begin{align*}
2\left(2S-\frac1S\right)
&=
2(\sqrt2S-1)\frac{\sqrt2S+1}{S}\\
&\leq
4\sqrt2(\sqrt2S-1),
\end{align*}
because $S\geq1/\sqrt2$. Therefore,
\[
\min_{k\neq\ell}
\left\|
a-\frac{e_k-e_\ell}{\sqrt2}
\right\|_2
\leq
5\sqrt2(\sqrt2S-1)
\leq
8(\sqrt2S-1).
\]
Hence,
\[
\sqrt2S-1
\geq
\frac18
\min_{k\neq\ell}
\left\|
a-\frac{e_k-e_\ell}{\sqrt2}
\right\|_2.
\]
Together with \eqref{eq:minimum-volume-ratio}, this proves
\eqref{eq:sharp-lower-stability}.

We next prove the upper stability estimates. Suppose first that $n$ is
odd and write $n+1=2h$. Let $r$ be the number of positive coordinates
of $a$. Thus, $r$ is determined by $a$ and, by our normalization,
$1\leq r\leq h$. Since the coordinates are decreasing,
\[
\sum_{j=1}^{h}a_j
= \sum_{j=1}^ra_j+\sum_{j=r+1}^ha_j=
S-\sum_{j=r+1}^{h}|a_j|,
\]
where the sum on the right is understood to be zero when $r=h$.

By \eqref{eq:simplex-projection-formula},
\[
\frac{
\operatorname{vol}_{n-1}
\left(
\operatorname{Proj}_{a^\perp\cap\mathcal H}\Delta_n
\right)
}{
\operatorname{vol}_{n-1}
\left(
\operatorname{Proj}_{a_{\mathrm{odd}}^\perp\cap\mathcal H}\Delta_n
\right)
}
=
\frac{2S}{\sqrt{2h}}.
\]
Moreover,
\begin{align*}
\frac12\|a-a_{\mathrm{odd}}\|_2^2
&=
1-\langle a,a_{\mathrm{odd}}\rangle\\
&=
1-\frac{2S}{\sqrt{2h}}
+\frac2{\sqrt{2h}}
\sum_{j=r+1}^{h}|a_j|.
\end{align*}
If $r=h$, the last sum vanishes and the desired estimate follows
immediately. Suppose that $r<h$. Since
$\sum_{j=r+1}^{2h}|a_j|=S$ and
$|a_{r+1}|\leq\cdots\leq|a_{2h}|$, the numbers
$|a_{r+1}|,\ldots,|a_h|$ are the $h-r$ smallest among the
$2h-r$ numbers $|a_{r+1}|,\ldots,|a_{2h}|$. Hence, their average is
no larger than the average of all these numbers, i.e.,
\[
\frac{\displaystyle\sum_{j=r+1}^{h}|a_j|}{h-r}
\leq
\frac{\displaystyle\sum_{j=r+1}^{2h}|a_j|}{2h-r}
=\frac{S}{2h-r}.
\]
Hence
\begin{equation}\label{eq:odd-small-coordinates}
\sum_{j=r+1}^{h}|a_j|
\leq
\frac{h-r}{2h-r}S.
\end{equation}
The Cauchy--Schwarz inequality also gives
\[
1
\geq
\frac{S^2}{r}+\frac{S^2}{2h-r}.
\]
Consequently,
\[
\left(\frac{2S}{\sqrt{2h}}\right)^2
\leq
\frac{r(2h-r)}{h^2}
=
1-\frac{(h-r)^2}{h^2},
\]
and therefore, since $\frac{2S}{\sqrt{2h}}\leq 1$,
\begin{equation}\label{eq:odd-volume-deficit}
1-\frac{2S}{\sqrt{2h}}
\geq
\frac{(h-r)^2}{2h^2}.
\end{equation}
Here, in the last inequality, we used the fact that
$1-x=\frac{1-x^2}{1+x}\geq\frac{1-x^2}{2}$ for $0\leq x\leq1$.

On the other hand, \eqref{eq:odd-small-coordinates} implies
\begin{align*}
\frac2{\sqrt{2h}}
\sum_{j=r+1}^{h}|a_j|
&\leq
\frac{h-r}{2h-r}\frac{2S}{\sqrt{2h}}\\
&\leq
\frac{h-r}{h}\\
&\leq
2h\left(1-\frac{2S}{\sqrt{2h}}\right),
\end{align*}
where the last inequality follows from
\eqref{eq:odd-volume-deficit}. It follows that
\[
\frac12\|a-a_{\mathrm{odd}}\|_2^2
\leq
(2h+1)\left(1-\frac{2S}{\sqrt{2h}}\right)
=
(n+2)\left(1-\frac{2S}{\sqrt{2h}}\right).
\]
This proves \eqref{eq:odd-optimal-stability}.

Suppose now that $n$ is even and write $n=2h$. Once again, let $r$ be
the number of positive coordinates of $a$, so $1\leq r\leq h$. As in
the odd case,
\[
\sum_{j=1}^{h}a_j
=
S-\sum_{j=r+1}^{h}|a_j|.
\]
By an easy computation,
\[
\sqrt{\frac{h+1}{h(2h+1)}}
+
\sqrt{\frac{h}{(h+1)(2h+1)}}
=
\sqrt{\frac{2h+1}{h(h+1)}}.
\]
Therefore, \eqref{eq:simplex-projection-formula} gives
\[
\frac{
\operatorname{vol}_{n-1}
\left(
\operatorname{Proj}_{a^\perp\cap\mathcal H}\Delta_n
\right)
}{
\operatorname{vol}_{n-1}
\left(
\operatorname{Proj}_{a_{\mathrm{even}}^\perp\cap\mathcal H}\Delta_n
\right)
}
=
\sqrt{\frac{2h+1}{h(h+1)}}S,
\]
while, from the definition of $a_{\mathrm{even}}$,
\begin{align*}
\frac12\|a-a_{\mathrm{even}}\|_2^2
&=
1-\langle a,a_{\mathrm{even}}\rangle\\
&=
1-\sqrt{\frac{2h+1}{h(h+1)}}S
+\sqrt{\frac{2h+1}{h(h+1)}}
\sum_{j=r+1}^{h}|a_j|.
\end{align*}
If $r=h$, the desired estimate again follows immediately. Suppose
that $r<h$. Since there are $2h+1-r$ nonpositive coordinates,
\begin{equation}\label{eq:even-small-coordinates}
\sum_{j=r+1}^{h}|a_j|
\leq
\frac{h-r}{2h+1-r}S.
\end{equation}
The Cauchy--Schwarz inequality gives
\[
1
\geq
\frac{S^2}{r}+\frac{S^2}{2h+1-r},
\]
and consequently
\[
\frac{2h+1}{h(h+1)}S^2
\leq
\frac{r(2h+1-r)}{h(h+1)}
=
1-\frac{(h-r)(h-r+1)}{h(h+1)}.
\]
It follows, as before, that
\begin{equation}\label{eq:even-volume-deficit}
1-\sqrt{\frac{2h+1}{h(h+1)}}S
\geq
\frac{(h-r)(h-r+1)}{2h(h+1)}.
\end{equation}
Moreover, \eqref{eq:even-small-coordinates} gives
\begin{align*}
\sqrt{\frac{2h+1}{h(h+1)}}
\sum_{j=r+1}^{h}|a_j|
&\leq
\frac{h-r}{2h+1-r}
\sqrt{\frac{2h+1}{h(h+1)}}S\\
&\leq
\frac{h-r}{h+1}\\
&\leq
h\left(
1-\sqrt{\frac{2h+1}{h(h+1)}}S
\right),
\end{align*}
where the last inequality follows from
\eqref{eq:even-volume-deficit}. Therefore,
\[
\frac12\|a-a_{\mathrm{even}}\|_2^2
\leq
(h+1)
\left(
1-\sqrt{\frac{2h+1}{h(h+1)}}S
\right).
\]
Since $n=2h$, this proves \eqref{eq:even-optimal-stability}.

Finally, the exponent $1$ in the lower stability estimate is optimal.
It suffices to consider
$$
a(t)=\frac{1}{\sqrt{2(1-t+t^2)}}(1,-(1-t),-t,0,\ldots,0).
$$
Then, $a(t)\in \mathbb{S}^n\cap \mathcal{H}'$ and
$0<d(t):=\min_{i\neq j}\left\|a(t)-\frac{e_i-e_j}{\sqrt{2}}\right\|
\to 0$ as $t\to 0^+$. One can directly compute
\[
\frac{\operatorname{vol}_{n-1}\left(\operatorname{Proj}_{a(t)^{\perp}\cap \mathcal{H}'}\Delta_n\right)}
{\operatorname{vol}_{n-1}\left(\operatorname{Proj}_{a_*^{\perp}\cap \mathcal{H}'}\Delta_n\right)}
-1
=\frac{1}{\sqrt{1-t+t^2}}-1
=\frac{t}{2}+O(t^2).
\]
Moreover,
\begin{equation}\label{d^2}
d(t)^2=2-\frac{2-t}{\sqrt{1-t+t^2}}
=\frac{3}{4}t^2+O(t^3),
\end{equation}
consequently,
\[
d(t)=\frac{\sqrt{3}}{2}t+O(t^2).
\]
Since $O(d^2(t))=O(t^2)$, from equation~\eqref{d^2}, we get
$t=\frac{2}{\sqrt{3}}d(t)+O(d^2(t))$.
Finally, notice that $O(d^2(t))=o(d(t))$, and thus
\[
\frac{\operatorname{vol}_{n-1}\left(\operatorname{Proj}_{a(t)^{\perp}\cap \mathcal{H}'}\Delta_n\right)}
{\operatorname{vol}_{n-1}\left(\operatorname{Proj}_{a_*^{\perp}\cap \mathcal{H}'}\Delta_n\right)}
-1
=\frac{1}{\sqrt{3}}d(t)+o(d(t)).
\]
If the estimate held with an exponent $0<\alpha<1$, then, for some $c>0$,
\[
c\,d(t)^\alpha
\leq
\frac{
\operatorname{vol}_{n-1}
\left(
\operatorname{Proj}_{a(t)^\perp\cap\mathcal H'}\Delta_n
\right)
}{
\operatorname{vol}_{n-1}
\left(
\operatorname{Proj}_{a_*^\perp\cap\mathcal H'}\Delta_n
\right)
}-1
=\frac{1}{\sqrt{3}}d(t)+o(d(t)).
\]
Dividing by $d(t)^\alpha$ and letting $t\to0^+$ gives
$c\leq0$, a contradiction.

Similarly, the exponent 2 in the upper stability estimate is optimal. To see this, let $a_0$ be $a_{\text{even}}$ or $a_{odd}$, depending on the parity of $n$, and consider the curve, for $t>0$,
\[
a(t)=\frac{1}{\sqrt{1+t^2}}\left(a_0+t\frac{e_i-e_j}{\sqrt{2}}\right),
\]
for $i,j$ in the same block of $a$. Indeed, $\|a(t)-a_0\|_2\to 0$ and
\[
1-\frac{\operatorname{vol}_{n-1}\left(\operatorname{Proj}_{a(t)^{\perp}\cap \mathcal{H}'}\Delta_n\right)}
{\operatorname{vol}_{n-1}\left(\operatorname{Proj}_{a_0^{\perp}\cap \mathcal{H}'}\Delta_n\right)}=1-\langle a(t),a_0\rangle=\frac{1}{2}\|a(t)-a_0\|_2^2.
\]
If either upper stability estimate held with an exponent $0<\alpha<2$, and a positive constant, we would obtain a
contradiction.

\section{Probabilistic formula for $L_p$-projection bodies}\label{sec:3}

Using the notation and definitions from the introduction, we
derive a probabilistic formula for the support functions of the
$L_p$-projection bodies of $\ell_q$-balls. These formulas will
be used to prove the stability estimates and the monotonicity
of normalized support ratios in the following sections.
For background on the $L_p$-Brunn--Minkowski theory, we refer to
\cite{schneider2013convex,gardner2006geometric}.

For $p = 1$ a probabilistic formula for $h_{\Pi_pB_q^n}$ for $1 < q < \infty$ was obtained by Barthe and Naor in \cite{barthe2002hyperplane}. Now we extend it to $p > 1$. The proof below follows closely the strategy of \cite{barthe2002hyperplane}.
\begin{proof}[Proof of Proposition~\ref{prop:Lp-projection-formula}]
    We start by transferring the integration from $\Sp^{n-1}$ to $\partial K$ and then replacing the surface area measure of $K$ with the more convenient cone measure. For a convex body $K \subset \R^n$ with $0 \in \text{int}K$ let $\sigma_K$ be the surface area measure of $\partial K$ normalized to be a probability measure, so that $|\partial K|d\sigma_K(x) = dS_{n-1}(K, \n(x))$, where $|\partial K|$ is the classical surface area of $K$ and $\n(x)$ is the outer unit normal vector to $\partial K$ at $x$. Let also $\mu_K$ be the normalized cone measure on $\partial K$, i.e. $\mu_K(B) = \frac{\vol_n(\text{cone}(B))}{\vol_n(K)}$, where $B \subseteq \partial K$ and $\text{cone}(B) = \{tb \colon 0 \le t \le 1, b \in B\}$. It was proved in \cite{naor2003projecting} that for almost every $x \in \partial K$ we have
    \begin{align}\label{diffmes}
        \frac{d\sigma_K}{d\mu_K}(x) = \frac{n\vol_n(K)}{|\partial K|}\left\|\nabla(\|\cdot\|_K)(x)\right\|_2,
    \end{align}
    where $\|\cdot\|_K$ is the Minkowski functional of $K$. Note that $\n(x) = \frac{\nabla(\|\cdot\|_K)(x)}{\|\nabla(\|\cdot\|_K)(x)\|_2}$ and the function $h_K$ is one-homogeneous. Using the notation above, these observations and identity \eqref{diffmes} we have
    \begin{align*}
        h_{\Pi_pK}^p(a) &= \frac{|\partial K|}{2}\int_{\partial K}|\langle a, \n(x)\rangle|^ph_K(\n(x))^{1-p}d\sigma_K(x) \\ &= \frac{|\partial K|}{2}\int_{\partial K}\frac{|\langle a, \nabla(\|\cdot\|_K)(x)\rangle|^p}{\|\nabla(\|\cdot\|_K)(x)\|_2^p} \cdot \frac{h_K(\nabla(\|\cdot\|_K)(x))^{1-p}}{\|\nabla(\|\cdot\|_K)(x)\|_2^{1-p}}\frac{d\sigma_K}{d\mu_K}(x)d\mu_K(x) \\ &= \frac{n\vol_n(K)}{2}\int_{\partial K}|\langle a, \nabla(\|\cdot\|_K)(x)\rangle|^ph_K^{1-p}(\nabla(\|\cdot\|_K)(x))d\mu_K(x).
    \end{align*}
    Now, let $K = B_q^n$. Since $\|x\|_{B_q^n} = \|x\|_q$ and $h_{B_q^n}(x) = \|x\|_{q^*}$, where $q^*$ is the H\"older conjugate of $q$, we get
    \begin{align*}
        h_{\Pi_pB_q^n}^p(a) = \frac{nq\vol_n(B_q^n)}{2}\int_{\partial B_q^n}\left|\sum_{i=1}^na_i|x_i|^{q-1}\text{sign}(x_i)\right|^p \cdot \left|\sum_{i=1}^n|x_i|^{(q-1)q^*}\right|^{\frac{1-p}{q^*}}d\mu_{B_q^n}(x).
    \end{align*}
    Let $Y_1, \ldots, Y_n$ be i.i.d. random variables with density proportional to $e^{-|t|^q}$, $S = \left(\sum_{i=1}^n|Y_i|^q\right)^{\frac 1 q}$ and $Z = \left(\frac{Y_1}{S}, \ldots, \frac{Y_n}{S}\right)$. It was proved in the independent works \cite{schechtman1990volume, rachev1991approximate} that $S$, $Z$ are independent and $\mu_{B_q^n}$ is the distribution of $Z$. Thus we get
    \begin{align*}
        h_{\Pi_pB_q^n}^p(a) &= \frac{nq\vol_n(B_q^n)}{2}\E_Z\left[\frac{\left|\sum_{i=1}^na_i|Y_i|^{q-1}\text{sign}(Y_i)\right|^p}{S^{p(q-1)}} \cdot \frac{\left|\sum_{i=1}^n|Y_i|^q\right|^{\frac{(q-1)(1-p)}{q}}}{S^{(1-p)(q-1)}}\right] \\ &= \frac{nq\vol_n(B_q^n)}{2} \cdot \frac{\E S^{p(q-1)}}{\E S^{p(q-1)}}\E_Z\left[\frac{\left|\sum_{i=1}^na_i|Y_i|^{q-1}\text{sign}(Y_i)\right|^p}{S^{p(q-1)}} \cdot \frac{S^{(1-p)(q-1)}}{S^{(1-p)(q-1)}}\right] \\ &= \frac{nq\vol_n(B_q^n)}{2\E S^{p(q-1)}}\E\left|\sum_{i=1}^na_i|Y_i|^{q-1}\text{sign}(Y_i)\right|^p.
    \end{align*}
    To finish the proof with $C_{p, q, n} = \frac{n\vol_n(B_q^n)}{2\E S^{p(q-1)}}$ it remains to observe that $|Y_i|^{q-1}\text{sign}(Y_i)$ has the same distribution as $X_i$, which is a simple exercise.
\end{proof}

\section{Gaussian mixtures}\label{sec:mixtures}

As mentioned in the introduction, it follows from the formula \eqref{eq:Lp-projection-probabilistic-formula} that proving the stability estimates for $\Pi_pB_q^n$ for $1 < q < \infty$ boils down to showing suitable stability estimates for the random variables $X_i$ from Proposition \ref{prop:Lp-projection-formula}. Moreover, these random variables are Gaussian mixtures.

Before presenting our results for Gaussian mixtures we shall recall the notion of the Schur order. We say that a vector $a = (a_1, \ldots, a_n)$ is dominated by a vector $b = (b_1, \ldots, b_n)$ in the Schur order (and denote it by $a \preceq b$) if
\begin{align*}
    \sum_{i=1}^na_i = \sum_{i=1}^nb_i \quad \text{and} \quad \forall_{1 \le k < n}\sum_{i=1}^ka_i^* \le \sum_{i=1}^kb_i^*,
\end{align*}
where $a_1^*, \ldots, a_n^*$ and $b_1^*, \ldots, b_n^*$ are the non-increasing rearrangements of the coordinates of $a$ and $b$, respectively. Theorem 3 in \cite{eskenazis2018gaussian} (see also \cite[Proposition 2.6]{avercamp2003wavelet}) shows that if $X_1, \ldots, X_n$ are i.i.d. Gaussian mixtures, then the function $\left(a_1^2, \ldots, a_n^2\right) \mapsto \E\left|\sum_{i=1}^na_iX_i\right|^p$ is decreasing with respect to the Schur order for $p < 2$ and increasing for $p > 2$. We shall prove the following strengthening of this result, which also generalizes \cite[Theorem 2]{jakimiuk2026stability} to $p > 1$.

\begin{theorem}\label{thm:gaussian-mixtures}
    Let $X_1, \ldots, X_n$ be i.i.d. square integrable Gaussian mixtures whose distribution is not Gaussian. Then for any $1 \le p < 2$ there exists a positive constant $\kappa_p$, depending only on the distribution of $X_1$ and on $p$, such that for any real numbers $a_1, \ldots, a_n$, $b_1, \ldots, b_n$ satisfying $\sum_{i=1}^na_i^2 = \sum_{i=1}^nb_i^2 = 1$ and $\left(b_1^2, \ldots, b_n^2\right) \preceq \left(a_1^2, \ldots, a_n^2\right)$ we have
    \begin{align*}
        \E\left|\sum_{i=1}^nb_iX_i\right|^p - \E\left|\sum_{i=1}^na_iX_i\right|^p \ge \kappa_p\left(\sum_{i=1}^na_i^4 - \sum_{i=1}^nb_i^4\right).
    \end{align*}
    Moreover, if additionally $\E|X_1|^p < \infty$, then 
    \begin{align*}
        \E\left|\sum_{i=1}^na_iX_i\right|^p - \E\left|\sum_{i=1}^nb_iX_i\right|^p \ge \kappa_p\left(\sum_{i=1}^na_i^4 - \sum_{i=1}^nb_i^4\right).
    \end{align*}
    holds for $2 < p < \infty$.
\end{theorem}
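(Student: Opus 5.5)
We follow the standard route for Schur-monotonicity of Gaussian mixtures: reduce to a two-coordinate step (a ``T-transform''), then quantify that step. Write $X_i=Y_iZ_i$ and condition on $Y=(Y_1,\dots,Y_n)$. Then
\[
\E\Bigl|\sum_i a_iX_i\Bigr|^p=\gamma_p\,\E_Y\Bigl(\sum_i a_i^2Y_i^2\Bigr)^{p/2},
\qquad \gamma_p=\E|Z|^p .
\]
So, with $x_i=a_i^2$, the quantity is $\Phi(x)=\gamma_p\E\,\phi\bigl(\sum_i x_iY_i^2\bigr)$, where $\phi(t)=t^{p/2}$. This $\phi$ is strictly concave for $p<2$ and strictly convex for $p>2$, and $\Phi$ is symmetric in $x$ because the $Y_i$ are i.i.d. By a classical result (Muirhead/Hardy--Littlewood--Pólya), if $y=(b_i^2)\preceq x=(a_i^2)$ then $y$ is reached from $x$ by finitely many T-transforms. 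Each T-transform averages two coordinates, $(x_1,x_2)\mapsto(\lambda x_1+(1-\lambda)x_2,\ (1-\lambda)x_1+\lambda x_2)$. Along such a chain, $\sum x_i^2$ decreases monotonically. Hence it suffices to prove the inequality for a single T-transform with a constant $\kappa_p$ independent of the step: the right-hand sides $\sum x_i^2-\sum y_i^2$ telescope, and so do the left-hand sides.

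For a single step, fix the other coordinates and write $s=x_1+x_2$. Set $x_1=\tfrac s2+u$, $x_2=\tfrac s2-u$, and let $R=\sum_{i\ge3}x_iY_i^2$. Define
\[
g(u)=\E\,\phi\Bigl(R+\tfrac s2(Y_1^2+Y_2^2)+u(Y_1^2-Y_2^2)\Bigr).
\]
This $g$ is even in $u$ by exchangeability, and concave (resp.\ convex) in $u$. The change in $\sum x_i^2$ under the step is $2(u^2-v^2)$, where $|v|\le|u|$. So we need $g(v)-g(u)\ge c\,(u^2-v^2)$ for $p<2$, and the reverse sign for $p>2$. Write $g(v)-g(u)=\int_{|v|}^{|u|}(-g'(t))\,dt$ and $-g'(t)=\int_0^t(-g''(r))\,dr$. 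It then suffices to prove the lower bound
\[
-g''(r)=\tfrac p2\bigl(1-\tfrac p2\bigr)\,\E\Bigl[(Y_1^2-Y_2^2)^2\,W^{p/2-2}\Bigr]\ \ge\ c>0,
\]
where $W$ is the argument of $\phi$. This bound must be uniform over all admissible $r,s,R$. Note that $W\le \sum_i x_iY_i^2\cdot(\text{something bounded})$; more precisely, $W$ is a convex combination of the $Y_i^2$ with total weight $\sum x_i=1$.

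The main obstacle is this uniform lower bound, which is also where non-Gaussianity enters. The distribution of $Y$ is non-degenerate, so $\pp(|Y_1^2-Y_2^2|\ge\delta)\ge\eta>0$ for some $\delta,\eta$. For $p<2$ the exponent $p/2-2$ is negative, so we need $W$ bounded above with positive probability. By Markov's inequality, since $\E W=\E Y_1^2=\E X_1^2<\infty$, we have $\pp(W\le M)\ge 1-\E X_1^2/M$. The difficulty is that the events $\{W\le M\}$ and $\{|Y_1^2-Y_2^2|\ge\delta\}$ are not independent. We would handle this by also bounding $Y_1,Y_2$: choose $\delta,M'$ with $\pp(\delta\le|Y_1^2-Y_2^2|,\ Y_1^2+Y_2^2\le M')\ge\eta$. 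On that event $W\le M'+\sum_{i\ge3}x_iY_i^2$, and the last sum is independent of $(Y_1,Y_2)$, with mean at most $\E Y_1^2$. Markov's inequality applied to it then gives $\E[\cdots]\ge \tfrac12\eta\,\delta^2(M'+2\E Y_1^2)^{p/2-2}$.

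For $p>2$ the exponent $p/2-2$ may also be negative when $2<p<4$, and the same argument applies. For $p\ge4$ we need $W$ bounded below instead. Here $W\ge \min(x_1',x_2')(Y_1^2+Y_2^2)$ does not work, since the weights may be small. Instead, the factor $(Y_1^2-Y_2^2)^2W^{p/2-2}$ can be handled by restricting to the event $\{W\ge w_0\}$, which has probability bounded below uniformly. This follows from Paley--Zygmund, using $\E W=\E Y_1^2$ and the bound $\E W^2\le \E Y_1^4$ (assuming finite moments; otherwise we truncate). The hypothesis $\E|X_1|^p<\infty$ ensures that $g$ is finite and twice differentiable under the integral. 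Collecting the constants gives $\kappa_p$ depending only on $p$ and the law of $Y_1$.
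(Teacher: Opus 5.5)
Your reduction (conditioning on $Y$, T-transforms with telescoping, and a uniform lower bound on $|g''|$) is the paper's strategy. Your treatment of $1\le p<2$ and $2<p<4$ is sound. On the event $\{|Y_1^2-Y_2^2|\ge\delta,\ Y_1^2+Y_2^2\le M'\}$, which depends only on $(Y_1,Y_2)$, you have $W\le M'+R$ with $R$ independent, and Markov's inequality for $R$ finishes the argument, much as the paper does through Lemma~\ref{lem:gaussian-mixtures-auxiliary}.

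The gap is the case $p>4$; for $p=4$, $g''\equiv 2\E(Y_1^2-Y_2^2)^2$ and there is nothing to do. Paley--Zygmund applied to $W$ gives only $\pp(W\ge w_0)\ge\theta$. What you need is $\E\bigl[(Y_1^2-Y_2^2)^2\1_{\{W\ge w_0\}}\bigr]\ge c$, uniformly in $r$, $s$ and the weights inside $R$. Since $W$ depends on $(Y_1,Y_2)$, this is exactly the dependence problem you identified for $p<2$, and here it is not addressed. The two marginal bounds $\pp(W\ge w_0)\ge\theta$ and $\pp(|Y_1^2-Y_2^2|\ge\delta)\ge\eta$ say nothing about the intersection unless $\theta+\eta>1$.

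What is missing is a lower bound on $W$ that again splits into a $(Y_1,Y_2)$-part and the independent remainder. You discarded $\min(x_1',x_2')(Y_1^2+Y_2^2)$, but the bound $x_1'Y_1^2+x_2'Y_2^2\ge s\min(Y_1^2,Y_2^2)$ gives $W\ge R+s\min(Y_1^2,Y_2^2)$ uniformly in $r$. The paper obtains the same decoupling by symmetrization: in its double integral over $w\in[-u,u]$ it keeps only $w\ge 0$, where the argument of $\varphi''$ is at least $R+\tfrac s2(Y_1^2+Y_2^2)$ in your notation. Then do the following.

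\begin{enumerate}
\item Restrict to the event $\{|Y_1^2-Y_2^2|\ge\delta,\ \min(Y_1^2,Y_2^2)\ge m\}$. It has positive probability because $Y_1>0$ is nondegenerate.
\item Apply Paley--Zygmund to $R$ alone, not to $W$. Since $\E R=(1-s)\E Y_1^2$ and $\E R^2\le(1-s)^2\E Y_1^4$, you get $\pp\bigl(R\ge\tfrac12\E R\bigr)\ge(\E Y_1^2)^2/(4\E Y_1^4)$ when $s<1$.
\item On that event, $R+sm\ge\min\{\tfrac12\E Y_1^2,m\}$. This also holds trivially when $s=1$, where $R=0$.
\end{enumerate}

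By independence of $R$ from $(Y_1,Y_2)$ and monotonicity of $t\mapsto t^{p/2-2}$, this yields $g''(r)\ge c>0$ with $c$ depending only on $p$ and the law of $Y_1$. That closes the case $p>4$.
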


Theorem \ref{thm:gaussian-mixtures} is crucial in the proof of \eqref{eq:q-above-two-coordinate-stability-below-two}, \eqref{eq:q-above-two-diagonal-stability-below-two}, \eqref{eq:q-above-two-coordinate-stability-above-two}, and \eqref{eq:q-above-two-diagonal-stability-above-two}, we also believe that it is of independent interest.

We shall need the following auxiliary lemma.

\begin{lemma}\label{lem:gaussian-mixtures-auxiliary}
    Let $Y_1$, $Y_2$ be i.i.d. square integrable positive random variables whose distribution is not a Dirac delta distribution. Let also $a_1$, $a_2$, $b_1$, $b_2$, $s$ be real numbers such that $s \ge 0$, $a_1^2 + a_2^2 = b_1^2 + b_2^2 = \sigma^2 > 0$ and $a_1^2 \ge b_1^2 \ge b_2^2 \ge a_2^2$. Let $\varphi \colon [0, \infty) \to \R$ be a convex function with the second derivative monotone on $(0, \infty)$. Moreover, assume that $\varphi\left(x_1Y_1^2 + x_2Y_2^2 + z\right)$ is integrable for every $x_1, x_2, z \in \R$. Then there exist positive constants $c$, $\eta$, depending only on $\varphi$ and the distribution of $Y_1$, such that
    \begin{align*}
        \E\varphi\left(a_1^2Y_1^2 + a_2^2Y_2^2 + s\right) - \E\varphi\left(b_1^2Y_1^2 + b_2^2Y_2^2 + s\right) \ge c\varphi''\left(s + \eta\sigma^2\right)\left(a_1^4 + a_2^4 - b_1^4 - b_2^4\right).
    \end{align*}
\end{lemma}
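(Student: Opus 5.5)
Write $a_1^2=\tfrac{\sigma^2}{2}+u$, $a_2^2=\tfrac{\sigma^2}{2}-u$, $b_1^2=\tfrac{\sigma^2}{2}+v$, $b_2^2=\tfrac{\sigma^2}{2}-v$. The hypothesis $a_1^2\ge b_1^2\ge b_2^2\ge a_2^2$ means exactly $\tfrac{\sigma^2}{2}\ge u\ge v\ge 0$, and then
\[
a_1^4+a_2^4-b_1^4-b_2^4=2(u^2-v^2).
\]
The plan is to view both expectations as values at $t=u$ and $t=v$ of the single function $F(t)=\E\varphi(W_t)$, where $D=Y_1^2-Y_2^2$ and
\[
W_t=\tfrac{\sigma^2}{2}(Y_1^2+Y_2^2)+tD+s=\bigl(\tfrac{\sigma^2}{2}+t\bigr)Y_1^2+\bigl(\tfrac{\sigma^2}{2}-t\bigr)Y_2^2+s,
\]
and then to use a second-order Taylor expansion around $t=0$, where the first derivative vanishes by symmetry.

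To avoid differentiating under the expectation, I would symmetrize pointwise. Swapping $Y_1$ and $Y_2$ maps $W_t$ to $W_{-t}$, and $(Y_1,Y_2)$ is exchangeable, so $2F(t)=\E h(t)$ with $h(t)=\varphi(W_t)+\varphi(W_{-t})$. For fixed $Y_1,Y_2$ the function $h$ is even on $[-\sigma^2/2,\sigma^2/2]$. Moreover $W_{\pm t}\ge s+\sigma^2\min(Y_1^2,Y_2^2)>0$ there, so $\varphi$ is evaluated only on $(0,\infty)$, where $\varphi''$ exists (being monotone). Hence $h'(0)=0$, and
\[
h(u)-h(v)=\int_v^u\int_0^t h''(r)\,dr\,dt,\qquad h''(r)=D^2\bigl(\varphi''(W_r)+\varphi''(W_{-r})\bigr)\ge 0 .
\]
Since the integrand is nonnegative, Tonelli lets us take expectations. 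Both $\E\varphi(W_u)$ and $\E\varphi(W_v)$ are finite by the integrability assumption. If we find $m>0$ with $\E\bigl[D^2\varphi''(W_r)\bigr]\ge m\,\varphi''(s+\eta\sigma^2)$ for all $|r|\le\sigma^2/2$, we obtain
\[
F(u)-F(v)\ge m\,\varphi''(s+\eta\sigma^2)\,\frac{u^2-v^2}{2}=\frac m4\,\varphi''(s+\eta\sigma^2)\bigl(a_1^4+a_2^4-b_1^4-b_2^4\bigr).
\]

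The key step, and the main obstacle, is this uniform lower bound with constants depending only on $\varphi$ and the law of $Y_1$, not on $s$, $\sigma$ or $r$. The point is that for $|r|\le\sigma^2/2$,
\[
s+\sigma^2\min(Y_1^2,Y_2^2)\le W_r\le s+\sigma^2\max(Y_1^2,Y_2^2).
\]
Since $Y_1^2$ is not a.s. constant, I would fix numbers $0<\alpha<\beta<M$ with $\pp(Y_1^2\in(0,\alpha))>0$ and $\pp(Y_1^2\in(\beta,M))>0$. Then the event
\[
A=\{Y_1^2\in(\beta,M),\ Y_2^2\in(0,\alpha)\}
\]
has positive probability and $D^2\ge(\beta-\alpha)^2$ on $A$. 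I would also fix $\eta_0>0$ with $\pp(Y_1^2\ge\eta_0)=1$ if $Y$ is bounded away from $0$; otherwise I would shrink the events as described below.

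If $\varphi''$ is nonincreasing, on $A$ we have $W_r\le s+M\sigma^2$, so $\varphi''(W_r)\ge\varphi''(s+M\sigma^2)$. We take $\eta=M$ and $m=(\beta-\alpha)^2\pp(A)$.

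If $\varphi''$ is nondecreasing, we need $W_r$ bounded below by $s+\eta\sigma^2$, so both $Y_i^2$ must be bounded away from $0$. Here I would choose $\alpha'<\alpha$ with $\pp(Y_1^2\in(\alpha',\alpha))>0$; this is possible after replacing $\alpha$ by a suitable quantile, since $Y_1>0$ a.s. We then use $A'=\{Y_1^2\in(\beta,M),\ Y_2^2\in(\alpha',\alpha)\}$, on which $W_r\ge s+\alpha'\sigma^2$, and take $\eta=\alpha'$. In either case $c=m/4$, which completes the argument.
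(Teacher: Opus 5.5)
Your proof is correct and follows essentially the same route as the paper. Both arguments symmetrize using exchangeability of $(Y_1,Y_2)$ (the paper writes $(R,D)\sim(R,|D|\varepsilon)$), express the difference as a double integral of $D^2\varphi''$, and restrict to a positive-probability event where $|D|$ is bounded below and the argument of $\varphi''$ is controlled. The only difference is cosmetic: you bound $\varphi''(W_r)$ over the whole range via $s+\sigma^2\min(Y_1^2,Y_2^2)\le W_r\le s+\sigma^2\max(Y_1^2,Y_2^2)$, whereas the paper compares with $\varphi''(s+\sigma^2R)$ on half of the $w$-range, which in the nondecreasing case spares it from having to bound $\min(Y_1^2,Y_2^2)$ away from $0$.
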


\begin{proof}[Proof of Lemma \ref{lem:gaussian-mixtures-auxiliary}]
    Denote $\Delta = \E\varphi\left(a_1^2Y_1^2 + a_2^2Y_2^2 + s\right) - \E\varphi\left(b_1^2Y_1^2 + b_2^2Y_2^2 + s\right)$ and define numbers $\rho \ge \mu \ge \frac 1 2$ such that $a_1^2 = \rho\sigma^2$ and $b_1^2 = \mu\sigma^2$. Define also random variables $R = \frac{Y_1^2 + Y_2^2}{2}$ and $D = Y_1^2 - Y_2^2$. Note that $D$ is a symmetric random variable and $R$ is independent of $\sgn(D)$, hence $(R, D)$ has the same distribution as $(R, |D|\e)$, where $\e$ is a Rademacher random variable independent of $(Y_1, Y_2)$. We have
    \begin{align*}
        \E\varphi\left(a_1^2Y_1^2 + a_2^2Y_2^2 + s\right) &= \E\varphi\left(s + \sigma^2R + \left(\rho - \frac 1 2\right)\sigma^2D\right) \\ &= \E_{\e}\E_{R, D}\varphi\left(s + \sigma^2R + \left(\rho - \frac 1 2\right)\sigma^2|D|\e\right) \\ &= \frac 1 2\E\left[\varphi\left(s + \sigma^2R + \left(\rho - \frac 1 2\right)\sigma^2|D|\right) + \varphi\left(s + \sigma^2R - \left(\rho - \frac 1 2\right)\sigma^2|D|\right)\right]
    \end{align*}
    and in the same manner
    \begin{align*}
        \E\varphi\left(b_1^2Y_1^2 + b_2^2Y_2^2 + s\right) = \frac 1 2\E\left[\varphi\left(s + \sigma^2R + \left(\mu - \frac 1 2\right)\sigma^2|D|\right) + \varphi\left(s + \sigma^2R - \left(\mu - \frac 1 2\right)\sigma^2|D|\right)\right]
    \end{align*}
    holds. By subtracting and using basic calculus we get
    \begin{align*}
        \Delta &= \frac 1 2\E\left[\int_{\sigma^2|D|\left(\mu - \frac 1 2\right)}^{\sigma^2|D|\left(\rho - \frac 1 2\right)}\varphi'\left(s + \sigma^2R + t\right)dt - \int_{-\sigma^2|D|\left(\rho - \frac 1 2\right)}^{-\sigma^2|D|\left(\mu - \frac 1 2\right)}\varphi'\left(s + \sigma^2R + t\right)dt\right] \\ &= \frac 1 2 \E\int_{\sigma^2|D|\left(\mu - \frac 1 2\right)}^{\sigma^2|D|\left(\rho - \frac 1 2\right)}\left(\varphi'\left(s + \sigma^2R + t\right) - \varphi'\left(s + \sigma^2R - t\right)\right)dt \\ &= \frac{\sigma^2}{2}\E|D|\int_{\mu - \frac 1 2}^{\rho - \frac 1 2}\left(\varphi'\left(s + \sigma^2R + \sigma^2|D|u\right) - \varphi'\left(s + \sigma^2R - \sigma^2|D|u\right)\right)du \\ &= \frac{\sigma^2}{2}\E|D|\int_{\mu - \frac 1 2}^{\rho - \frac 1 2}\int_{-\sigma^2|D|u}^{\sigma^2|D|u}\varphi''\left(s + \sigma^2R + v\right)dvdu \\ &= \frac{\sigma^4}{2}\E|D|^2\int_{\mu - \frac 1 2}^{\rho - \frac 1 2}\int_{-u}^u\varphi''\left(s + \sigma^2R + \sigma^2|D|w\right)dwdu.
    \end{align*}
    If $\varphi''$ is non-increasing, then the bound $\varphi''\left(s + \sigma^2R + \sigma^2|D|w\right) \ge \varphi''\left(s + \sigma^2R\right)$ holds for $-u \le w \le 0$. Moreover, since the distribution of $Y_1$ is not a Dirac delta distribution and $Y_2$ is an independent copy of $Y_1$, we have that $\pp(|D| \ge \delta, R \le \eta) > 0$ for some $\delta, \eta > 0$. Thus, using positivity of $\varphi''$, we bound
    \begin{align*}
        \Delta &\ge \frac{\sigma^4}{2}\E\1_{|D| \ge \delta, R \le \eta}|D|^2\int_{\mu - \frac 1 2}^{\rho - \frac 1 2}\int_{-u}^0\varphi''\left(s + \sigma^2R\right)dwdu \\ &\ge \frac{\sigma^4}{2}\pp(|D| \ge \delta, R \le \eta)\delta^2\varphi''\left(s + \eta\sigma^2\right)\int_{\mu - \frac 1 2}^{\rho - \frac 1 2}udu \\ &= c\varphi''\left(s + \eta\sigma^2\right)\sigma^4\left(\left(\rho - \frac 1 2\right)^2 - \left(\mu - \frac 1 2\right)^2\right) \\ &= c\varphi''\left(s + \eta\sigma^2\right)\sigma^4\frac{\rho^2 + (1 - \rho)^2 - \mu^2 - (1 - \mu)^2}{2} = \frac 1 2c\varphi''\left(s + \eta\sigma^2\right)\left(a_1^4 + a_2^4 - b_1^4 - b_2^4\right),
    \end{align*}
    which completes the proof. For $\varphi''$ non-decreasing we proceed in the same way, only replacing $-u \le w \le 0$ with $0 \le w \le u$ and $R \le \eta$ with $R \ge \eta$.
\end{proof}

Now we are ready to prove Theorem \ref{thm:gaussian-mixtures}.

\begin{proof}[Proof of Theorem \ref{thm:gaussian-mixtures}]
    We start in a manner similar to the proof of Theorem 3 in \cite{eskenazis2018gaussian}. Let $Y_1, \ldots, Y_n, Z_1, \ldots, Z_n$, $Z$ be random variables such that $Y_i$ are positive, $Z$, $Z_i$ are standard Gaussians, $X_i$ has the same distribution as $Y_iZ_i$ for all $i$, and variables $X_1, \ldots, X_n$, $Y_1, \ldots, Y_n$, $Z_1, \ldots, Z_n$, $Z$ are independent. Then we have
    \begin{align*}
        \E\left|\sum_{i=1}^na_iX_i\right|^p = \E\left|\sum_{i=1}^na_iY_iZ_i\right|^p = \E_Y\E_Z\left|\left(\sum_{i=1}^na_i^2Y_i^2\right)^{\frac 1 2}Z\right|^p = \E|Z|^p \cdot \E\left|\sum_{i=1}^na_i^2Y_i^2\right|^{\frac p 2}.
    \end{align*}
    Since $\E|Z|^p$ is a constant depending only on $p$, it suffices to prove that
    \begin{align*}
        \E\left|\sum_{i=1}^nb_i^2Y_i^2\right|^{\frac p 2} - \E\left|\sum_{i=1}^na_i^2Y_i^2\right|^{\frac p 2} \ge \kappa_p\left(\sum_{i=1}^na_i^4 - \sum_{i=1}^nb_i^4\right)
    \end{align*}
    for $1 \le p < 2$ and
    \begin{align*}
        \E\left|\sum_{i=1}^na_i^2Y_i^2\right|^{\frac p 2} - \E\left|\sum_{i=1}^nb_i^2Y_i^2\right|^{\frac p 2} \ge \kappa_p\left(\sum_{i=1}^na_i^4 - \sum_{i=1}^nb_i^4\right)
    \end{align*}
    for $p > 2$. Due to technical reasons we shall divide the rest of the proof into three cases.

    \textit{Case $1 \le p < 2$.} By a known characterization of the Schur order we have $\left(b_1^2, \ldots, b_n^2\right) = b \preceq a = \left(a_1^2, \ldots, a_n^2\right)$ if and only if $b$ can be obtained from $a$ through a composition of finitely many $T$-transformations, where a $T$-transformation is a transformation bringing two coefficients of a vector closer together while preserving their sum and all remaining coefficients. Denote these $T$-transformations by $T_1, \ldots, T_k$, so that $b = T_kT_{k-1} \ldots T_1a$, and denote $T^j = T_jT_{j-1} \ldots T_1$ for $j = 1, 2, \ldots, k$. Then, by a telescoping sum argument, it suffices to prove that
    \begin{align}\label{eq:gaussian-lemma-t-transpositions}
        \E\left|\sum_{i=1}^n\left(T^ja\right)_iY_i^2\right|^{\frac p 2} - \E\left|\sum_{i=1}^n\left(T^{j-1}a\right)_iY_i^2\right|^{\frac p 2} \ge \kappa_p\left(\sum_{i=1}^n\left(T^{j-1}a\right)_i^2- \sum_{i=1}^n\left(T^ja\right)_i^2\right)
    \end{align}
    for $j = 1, \ldots, k$. By invariance under permutations and the structure of $T$-transformations, in order to simplify the notation we shall slightly abuse it and write that $T^{j-1}a = \left(a_1^2, \ldots, a_n^2\right)$, $T^ja = \left(b_1^2, b_2^2, a_3^2, \ldots, a_n^2\right)$ and $S = \sum_{i=3}^na_i^2Y_i^2$. Then \eqref{eq:gaussian-lemma-t-transpositions} becomes
    \begin{align*}
        \E\left|b_1^2Y_1^2 + b_2^2Y_2^2 + S\right|^{\frac p 2} - \E\left|a_1^2Y_1^2 + a_2^2Y_2^2 + S\right|^{\frac p 2} \ge \kappa_p\left(a_1^4 + a_2^4 - b_1^4 - b_2^4\right).
    \end{align*}
    The function $[0, \infty) \ni t \mapsto -t^{\frac p 2}$ is convex and hence by Lemma \ref{lem:gaussian-mixtures-auxiliary} we have
    \begin{align*}
        \E\left|a_1^2Y_1^2 + a_2^2Y_2^2 + S\right|^{\frac p 2} - \E\left|b_1^2Y_1^2 + b_2^2Y_2^2 + S\right|^{\frac p 2} \ge c_p\left(a_1^4 + a_2^4 - b_1^4 - b_2^4\right)\E\left|S + \eta\sigma^2\right|^{\frac p 2 - 2},
    \end{align*}
    where $\sigma^2 = a_1^2 + a_2^2 = b_1^2 + b_2^2$, $c_p = -\frac{cp(p-2)}{4} > 0$ and $c$, $\eta$ are positive constants depending only on the distribution of $Y_1$ (and hence only on the distribution of $X_1$). Thus we only need to get rid of the factor $\E\left|S + \eta\sigma^2\right|^{\frac p 2 - 2}$. Denote $r = \frac p 2 - 2$. Markov's inequality yields
    \begin{align*}
        \pp\left(S \le 2\E Y_1^2\right) = 1 - \pp\left(S > 2\E Y_1^2\right) \ge 1 - \frac{\E S}{2\E Y_1^2} = 1 - \frac{1 - \sigma^2}{2} \ge \frac 1 2
    \end{align*}
    and hence
    \begin{align*}
        \E\left|S + \eta\sigma^2\right|^r \ge \pp\left(S \le 2\E Y_1^2\right)\left|2\E Y_1^2 + \eta\right|^r \ge \frac 1 2 \left|2\E Y_1^2 + \eta\right|^r
    \end{align*}
    since $\sigma^2 \le 1$. As the RHS depends only on the distribution of $Y_1$, this completes the proof.

    \textit{Case $2 < p \le 4$.} Using the convexity of $t \mapsto t^{\frac p 2}$, by Lemma \ref{lem:gaussian-mixtures-auxiliary} we get
    \begin{align*}
        \E\left|a_1^2Y_1^2 + a_2^2Y_2^2 + S\right|^{\frac p 2} - \E\left|b_1^2Y_1^2 + b_2^2Y_2^2 + S\right|^{\frac p 2} \ge c_p\left(a_1^4 + a_2^4 - b_1^4 - b_2^4\right)\E\left|S + \eta\sigma^2\right|^r.
    \end{align*}
    Proceeding analogously to the previous case, using the fact that the function $t \mapsto t^r$ is non-increasing, we complete the proof.

    \medskip

    \textit{Case $p > 4$.} As in the previous cases, by Lemma \ref{lem:gaussian-mixtures-auxiliary} we get
    \begin{align*}
        \E\left|a_1^2Y_1^2 + a_2^2Y_2^2 + S\right|^{\frac p 2} - \E\left|b_1^2Y_1^2 + b_2^2Y_2^2 + S\right|^{\frac p 2} \ge c_p\left(a_1^4 + a_2^4 - b_1^4 - b_2^4\right)\E\left|S + \eta\sigma^2\right|^r
    \end{align*}
    and we need to show that
    \begin{align*}
        \E\left|a_1^2Y_1^2 + a_2^2Y_2^2 + S\right|^{\frac p 2} - \E\left|b_1^2Y_1^2 + b_2^2Y_2^2 + S\right|^{\frac p 2} \ge \kappa_p\left(a_1^4 + a_2^4 - b_1^4 - b_2^4\right).
    \end{align*}
    Thus it suffices to bound $\E\left|S + \eta\sigma^2\right|^r$ from below by a constant depending only on $p$ and distribution of $Y_1$. We have
    \begin{align*}
        \E S^2 &= \sum_{i=3}^na_i^4\E Y_i^4 + 2\sum_{3 \le i < j \le n}a_i^2a_j^2\E Y_i^2 \E Y_j^2 \le \left(\sum_{i=3}^na_i^2\right)^2\max\left\{\E Y_1^4, \left(\E Y_1^2\right)^2\right\} \\ &= \left(1 - \sigma^2\right)^2\max\left\{\E Y_1^4, \left(\E Y_1^2\right)^2\right\}.
    \end{align*}
    Note that $\E Y_1^4 < \infty$ since $\E|X_1|^p < \infty$ and $p > 4$. Now observe that this case differs from the previous ones in that the function $t \mapsto t^r$ is non-decreasing. We therefore aim to apply the Paley–Zygmund inequality rather than Markov’s inequality. Note that for non-decreasing $\varphi''$ in Lemma \ref{lem:gaussian-mixtures-auxiliary} we may guarantee that $\eta < 1$. Hence, for $\sigma^2 < 1$, by the Paley-Zygmund inequality we have
    \begin{align*}
        \pp\left(S \ge \eta\left(1 - \sigma^2\right)\E Y_1^2\right) \ge (1 - \eta)^2\frac{\left(1 - \sigma^2\right)^2\left(\E Y_1^2\right)^2}{\E S^2} \ge \frac{(1 - \eta)^2\left(\E Y_1\right)^2}{\max\left\{\E Y_1^4, \left(\E Y_1^2\right)^2\right\}},
    \end{align*}
    which gives
    \begin{align*}
        \E\left|S + \eta\sigma^2\right|^r \ge \E\1_{S \ge \eta\left(1 - \sigma^2\right)}\left|S + \eta\sigma^2\right|^r \ge \frac{(1 - \eta)^2\eta^r\left(\E Y_1^2\right)^2}{\max\left\{\E Y_1^4, \left(\E Y_1^2\right)^2\right\}}.
    \end{align*}
    If $\sigma^2 = 1$ then simply $S = 0$ almost surely and $\E\left|S + \eta\sigma^2\right|^r = \eta^r$. This completes the proof.
\end{proof}

\section{Proof of Theorem \ref{thm:lp-projection-stability}}

\subsection{The case $q = \infty$ (the cube)}
Let $I$ be uniformly distributed on $\{1,\ldots,n\}$ and take the random variable $X=\sqrt{n}|a_I|$. Then, $$\mathbb{P}(X=x)=\frac{\#\{i:\sqrt{n}|a_i|=x \}}{n}.$$
Since $\sum_{i=1}^na_i^2=1$, we have
\[
\E X^2=1
\qquad\text{and}\qquad
m:=\E X=\frac1{\sqrt n}\sum_{i=1}^n|a_i|.
\]
Consequently,
\begin{equation}\label{eq:cube-variance}
\operatorname{Var}(X)
=1-m^2
=\frac1{2n}\sum_{i,j=1}^n
\bigl(a_i-a_j\bigr)^2.
\end{equation}
Notice also that
\begin{equation}\label{eq:cube-variance-mean}
1-m
=
\frac{\operatorname{Var}(X)}{1+m}
\geq
\frac12\operatorname{Var}(X).
\end{equation}
By \eqref{eq:lp-projection-formulas},
\[
\frac{h_{\Pi_pB_{\infty}^n}(a)}{h_{\Pi_pB_{\infty}^n}\left(a_{(n)}\right)}
=
\left(n^{p/2-1}\sum_{i=1}^n|a_i|^p\right)^{1/p}
=\|X\|_p.
\]
On the other hand, one can easily see that 
\begin{equation}\label{h/h(a_1)}
\frac{h_{\Pi_pB_{\infty}^n}(a)}{h_{\Pi_pB_{\infty}^n}\left(a_{(1)}\right)}
=\|a\|_p.
\end{equation}

We first prove the cube estimates in the case $1\leq p<2$.
From~\eqref{h/h(a_1)} it is enough to estimate $\|a\|_p-1$. We have
$\|a-a_{(1)}\|_2^2=2(1-a_1)$. Suppose first that
$a_1\geq1/\sqrt2$, and set $u=1-a_1^2$. Since
\[
\sum_{i=2}^n a_i^p
\geq
\left(\sum_{i=2}^n a_i^2\right)^{p/2}
=u^{p/2},
\]
we obtain
\begin{align*}
\|a\|_p^p
&\geq
(1-u)^{p/2}+u^{p/2}\\
&\geq
1-u+u^{p/2}\\
&=
1+u^{p/2}\bigl(1-u^{1-p/2}\bigr)\\
&\geq
1+\bigl(1-2^{p/2-1}\bigr)u^{p/2}.
\end{align*}
Here we used $0\leq u\leq1/2$. Since
$0\leq(1-2^{p/2-1})u^{p/2}\leq1$, the elementary estimate
\[
(1+s)^{1/p}-1\geq\frac{s}{2p},
\qquad 0\leq s\leq1,
\]
gives
\begin{align*}
\|a\|_p-1
&\geq
\frac{1-2^{p/2-1}}{2p}u^{p/2}\\
&\geq
\frac{1-2^{p/2-1}}{2p\,2^{p/2}}
\|a-a_{(1)}\|_2^p,
\end{align*}
where we also used
$u=(1-a_1)(1+a_1)\geq1-a_1
=\frac12\|a-a_{(1)}\|_2^2$.

If $a_1<1/\sqrt2$, then, summing only over the nonzero coordinates,
\[
\|a\|_p^p
=
\sum_{i=1}^na_i^2a_i^{p-2}
\geq
a_1^{p-2}\sum_{i=1}^na_i^2
=a_1^{p-2}.
\]
Thus, since $p-2<0$,
\[
\|a\|_p-1
\geq
2^{\frac{2-p}{2p}}-1.
\]
On the other hand,
$\|a-a_{(1)}\|_2^p\leq2^{p/2}$, and hence
\[
\|a\|_p-1
\geq
2^{-p/2}\left(2^{\frac{2-p}{2p}}-1\right)
\|a-a_{(1)}\|_2^p.
\]
Therefore, \eqref{eq:cube-coordinate-stability-below-two} holds, for
example, with
\[
c_{p,\infty}
=
2^{-p/2}\min\left\{
\frac{1-2^{p/2-1}}{2p},
2^{\frac{2-p}{2p}}-1
\right\}.
\]

Still assuming that $1\leq p<2$, the interpolation inequality gives
\[
\|X\|_p
\leq
\|X\|_1^{\frac{2-p}{p}}
\|X\|_2^{\frac{2(p-1)}p}
=m^{\frac{2-p}{p}}.
\]
Since $0<m\leq1$ and $0<(2-p)/p\leq1$, the concavity of
$s\mapsto s^{(2-p)/p}$ gives
\[
1-\|X\|_p
\geq
1-m^{\frac{2-p}{p}}
\geq
\frac{2-p}{p}(1-m)
\geq
\frac{2-p}{2p}\operatorname{Var}(X).
\]
Together with \eqref{eq:cube-variance}, this proves
\eqref{eq:cube-diagonal-stability-below-two}.

Let now $p>2$. We interpolate $L_2$ between $L_1$ and $L_p$:
\[
\|X\|_2
\leq
\|X\|_1^\theta\|X\|_p^{1-\theta},
\qquad
\theta=\frac{p-2}{2(p-1)}.
\]
Since $\|X\|_2=1$, it follows that
\[
\|X\|_p
\geq
m^{-\frac{p-2}{p}}.
\]
The convexity of $s\mapsto s^{-(p-2)/p}$ therefore gives
\[
\|X\|_p-1
\geq
m^{-\frac{p-2}{p}}-1
\geq
\frac{p-2}{p}(1-m)
\geq
\frac{p-2}{2p}\operatorname{Var}(X).
\]
Using \eqref{eq:cube-variance} again proves
\eqref{eq:cube-diagonal-stability-above-two}.

For the other bound, since $p-2>0$, we have
\[
\|a\|_p^p
=
\sum_{i=1}^na_i^2a_i^{p-2}
\leq
a_1^{p-2}\sum_{i=1}^na_i^2
=a_1^{p-2}.
\]
Consequently,
\[
\|a\|_p
\leq
a_1^{\frac{p-2}{p}}
\leq
1-\frac{p-2}{p}(1-a_1)
=
1-\frac{p-2}{2p}\|a-a_{(1)}\|_2^2,
\]
where the second inequality follows from the concavity of
$s\mapsto s^{(p-2)/p}$. This proves
\eqref{eq:cube-coordinate-stability-above-two}.

\subsection{The case $q = 1$ (the cross-polytope)} 
Following the observation made in \cite{jakimiuk2026stability}, we base our proof on the idea of applying \cite[Theorem~1.3]{jendrej2015extensions}.
Set
\[
S=\sum_{i=1}^na_i\e_i.
\]
Then $\E S^2=1$. By
\cite[Theorem~1.3]{jendrej2015extensions}, there exists an index $k$
such that
\[
\sum_{i\neq k}a_i^2
\leq
C_0\operatorname{Var}(|S|).
\]
Since $a_1=\max_i|a_i|$, we obtain
\begin{equation}\label{eq:fkn-coordinate-distance}
1-a_1^2
\leq
1-a_k^2
\leq
C_0\operatorname{Var}(|S|).
\end{equation}
Moreover,
\begin{equation}\label{eq:cross-coordinate-distance}
\|a-a_{(1)}\|_2^2
=2(1-a_1)
\leq
2(1-a_1^2).
\end{equation}

Let $m=\E|S|$. Since $\E|S|^2=1$, we have $0<m\leq1$ and
\[
1-m
=
\frac{\operatorname{Var}(|S|)}{1+m}
\geq
\frac12\operatorname{Var}(|S|).
\]
If $1\leq p<2$, interpolation gives
\[
\|S\|_p
\leq
m^{\frac{2-p}{p}},
\]
and hence
\begin{align*}
1-\|S\|_p
&\geq
\frac{2-p}{p}(1-m)\\
&\geq
\frac{2-p}{2p}\operatorname{Var}(|S|)\\
&\geq
\frac{2-p}{2pC_0}(1-a_1^2)\\
&\geq
\frac{2-p}{4pC_0}\|a-a_{(1)}\|_2^2.
\end{align*}
Together with \eqref{eq:lp-cross-polytope-formula}, this proves
\eqref{eq:cross-coordinate-stability-below-two}.

If $p>2$, interpolation gives instead
\[
\|S\|_p
\geq
m^{-\frac{p-2}{p}}.
\]
It follows that
\begin{align*}
\|S\|_p-1
&\geq
\frac{p-2}{p}(1-m)\\
&\geq
\frac{p-2}{2p}\operatorname{Var}(|S|)\\
&\geq
\frac{p-2}{2pC_0}(1-a_1^2)\\
&\geq
\frac{p-2}{4pC_0}\|a-a_{(1)}\|_2^2.
\end{align*}
This proves \eqref{eq:cross-coordinate-stability-above-two}.

\subsection{The case $2 < q < \infty$}

Let $X_1, \ldots, X_n$ be the same as in Proposition \ref{prop:Lp-projection-formula} and $G$ be a Gaussian random variable with mean 0 and variance $\E|X_1|^2$. By the Schur-monotonicity of moments of Gaussian mixtures \cite[Theorem 3]{eskenazis2018gaussian}, the central limit theorem and \eqref{eq:Lp-projection-probabilistic-formula} we have
\begin{align*}
    \min\left\{\E|X_1|^p, \E|G|^p\right\} \le \E\left|\sum_{i=1}^na_iX_i\right|^p \le \max\left\{\E|X_1|^p, \E|G|^p\right\}
\end{align*}
for any $a = (a_1, \ldots, a_n) \in \Sp^{n-1}$. This is a two-sided bound by a constant depending only on $p$ and $q$. Hence, again by \eqref{eq:Lp-projection-probabilistic-formula}, the inequalities \eqref{eq:q-above-two-coordinate-stability-below-two}, \eqref{eq:q-above-two-diagonal-stability-below-two}, \eqref{eq:q-above-two-coordinate-stability-above-two}, and \eqref{eq:q-above-two-diagonal-stability-above-two} reduce to the inequalities
\begin{align}\label{eq:q-above-two-coordinate-stability-below-two-reduced}
    \E\left|\sum_{i=1}^na_iX_i\right|^p - \E|X_1|^p \ge c_{p, q}\left|a - a_{(1)}\right|^2 \quad \text{for} \quad 1 \le p < 2,
\end{align}
\begin{align}\label{eq:q-above-two-diagonal-stability-below-two-reduced}
    \E\left|\frac{1}{\sqrt{n}}\sum_{i=1}^nX_i\right|^p - \E\left|\sum_{i=1}^na_iX_i\right|^p \ge c_{p, q}\sum_{i=1}^n\left(a_i^2 - \frac 1 n\right)^2 \quad \text{for} \quad 1 \le p < 2,
\end{align}
\begin{align}\label{eq:q-above-two-coordinate-stability-above-two-reduced}
    \E|X_1|^p - \E\left|\sum_{i=1}^na_iX_i\right|^p \ge c_{p, q}\left|a - a_{(1)}\right|^2 \quad \text{for} \quad 2 < p < \infty
\end{align}
and
\begin{align}\label{eq:q-above-two-diagonal-stability-above-two-reduced}
    \E\left|\sum_{i=1}^na_iX_i\right|^p - \E\left|\frac{1}{\sqrt{n}}\sum_{i=1}^nX_i\right|^p \ge c_{p, q}\sum_{i=1}^n\left(a_i^2 - \frac 1 n\right)^2 \quad \text{for} \quad 2 < p < \infty,
\end{align}
respectively.            
        
The inequalities \eqref{eq:q-above-two-diagonal-stability-below-two-reduced} and \eqref{eq:q-above-two-diagonal-stability-above-two-reduced} follow directly from Theorem \ref{thm:gaussian-mixtures} with $b_i = \frac{1}{\sqrt{n}}$ since
\begin{align*}
    \sum_{i=1}^n\left(a_i^2 - \frac 1 n\right)^2 = \sum_{i=1}^na_i^4 - \frac 1 n = \sum_{i=1}^na_i^4 - \sum_{i=1}^n\left(\frac{1}{\sqrt{n}}\right)^4.
\end{align*}
For the inequalities \eqref{eq:q-above-two-coordinate-stability-below-two-reduced} and \eqref{eq:q-above-two-coordinate-stability-above-two-reduced} first recall that
\begin{align*}
    \left|a - a_{(1)}\right|^2 = 2 - 2a_1.
\end{align*}
Now we fix $1 \le p < 2$. We have by Theorem \ref{thm:gaussian-mixtures} and the assumption that $a_1 = \max_{1 \le i \le n}a_i$ that
\begin{align*}
    \E\left|\sum_{i=1}^na_iX_i\right|^p - \E|X_1|^p \ge C_{p, q}\left(1 - \sum_{i=1}^na_i^4\right) \ge  C_{p, q}\left(1 - a_1^2\sum_{i=1}^na_i^2\right) \ge C_{p, q}(1 - a_1),
\end{align*}
where the last estimate follows from $\sum_{i=1}^na_i^2 = 1$ and $1 - a_1^2 \ge 1 - a_1$. It resolves the inequality \eqref{eq:q-above-two-coordinate-stability-below-two-reduced}. The inequality \eqref{eq:q-above-two-coordinate-stability-above-two-reduced} follows analogously, which completes the proof.

\section{Proof of Theorem \ref{thm:monotonicity-normalized-lp-projections}}

\begin{lemma}\label{lem:projection-variables-convex-order}
Fix $p>0$. For $q>1$, let $X_q$ be a random variable with
density proportional to
\[
|x|^{\frac{2-q}{q-1}}
\exp\left(-|x|^{\frac{q}{q-1}}\right),
\qquad x\in\mathbb R,
\]
and set
\[
T_q:=\frac{|X_q|^p}{\mathbb E|X_q|^p}.
\]
Then, for every $1<q_1<q_2<\infty$,
\begin{equation}\label{eq:projection-variables-convex-order}
T_{q_1}\leq_{\mathrm{cx}}T_{q_2},
\end{equation}
where $\leq_{\mathrm{cx}}$ denotes convex order. That is,
\[
\mathbb E\varphi(T_{q_1})
\leq
\mathbb E\varphi(T_{q_2})
\]
for every convex function $\varphi:[0,\infty)\to\mathbb R$
for which the expectations are finite.
\end{lemma}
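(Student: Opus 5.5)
The plan is to reduce both $T_{q_1}$ and $T_{q_2}$ to explicit one-parameter families of densities, and then verify the classical Karlin--Novikoff cut criterion. That criterion says: if two integrable densities $f,g$ on $(0,\infty)$ have equal means and $f-g$ changes sign exactly twice, with pattern $-,+,-$, then the law with density $f$ is dominated in the convex order by the law with density $g$.

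\emph{Step 1 (explicit densities).} By the observation at the end of the proof of Proposition~\ref{prop:Lp-projection-formula}, $X_q$ has the law of $|Y|^{q-1}\sgn(Y)$, where $Y$ has density proportional to $e^{-|t|^q}$. Since $|Y|^q$ has the $\mathrm{Gamma}(1/q,1)$ law, we get $|X_q|^p\overset{d}{=}W^{\alpha}$ with $W\sim\mathrm{Gamma}(1/q,1)$ and $\alpha=p(q-1)/q$. A change of variables then shows that $T_q$ has density on $(0,\infty)$ of the form
\[
f_q(t)=K_q\,t^{\beta_q-1}\exp\bigl(-\lambda_q t^{\gamma_q}\bigr),\qquad \beta_q=\frac{1}{p(q-1)},\quad \gamma_q=\frac{q}{p(q-1)},
\]
with constants $K_q,\lambda_q>0$ fixed by the requirements that $f_q$ be a probability density and that $\E T_q=1$. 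The key qualitative facts are that both $\beta_q$ and $\gamma_q$ are strictly decreasing in $q>1$. All moments of $T_q$ are finite, so the expectations are finite for any convex $\varphi$ of polynomial growth. The general case follows from the cut criterion itself, since it gives $\E\varphi(T_{q_1})\le\E\varphi(T_{q_2})$ via the representation of convex functions by the functions $(t-c)_+$ and affine functions.

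\emph{Step 2 (sign pattern of $f_{q_1}-f_{q_2}$).} Fix $q_1<q_2$ and consider
\[
L(t)=\log\frac{f_{q_1}(t)}{f_{q_2}(t)}=\mathrm{const}+A\log t-\lambda_{q_1}t^{\gamma_1}+\lambda_{q_2}t^{\gamma_2},
\]
where $A=\beta_{q_1}-\beta_{q_2}>0$ and $\gamma_1=\gamma_{q_1}>\gamma_2=\gamma_{q_2}$. Then $tL'(t)=g(t):=A-\lambda_{q_1}\gamma_1t^{\gamma_1}+\lambda_{q_2}\gamma_2t^{\gamma_2}$. We have
\[
g'(t)=t^{\gamma_2-1}\bigl(\lambda_{q_2}\gamma_2^2-\lambda_{q_1}\gamma_1^2t^{\gamma_1-\gamma_2}\bigr),
\]
which changes sign exactly once, from $+$ to $-$. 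Hence $g$ first increases and then decreases. Since $g(0^+)=A>0$ and $g(t)\to-\infty$ as $t\to\infty$, the function $g$ has exactly one zero. So $L$ is strictly increasing and then strictly decreasing. Moreover $L(0^+)=-\infty$ (because $A>0$) and $L(\infty)=-\infty$ (because $\gamma_1>\gamma_2$). It follows that $L$ has at most two zeros. Since $f_{q_1}$ and $f_{q_2}$ are both probability densities, $f_{q_1}-f_{q_2}$ must take both signs, and therefore $L$ has exactly two zeros. This gives the sign pattern $-,+,-$ for $f_{q_1}-f_{q_2}$.

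\emph{Step 3 (conclusion).} Since $\E T_{q_1}=\E T_{q_2}=1$ and the densities cross with pattern $-,+,-$, the Karlin--Novikoff criterion gives $T_{q_1}\le_{\mathrm{cx}}T_{q_2}$. Concretely, let $F_i$ be the distribution functions. The function $\int_0^x(F_{q_1}-F_{q_2})$ vanishes at $0$ and at $\infty$ (by equal means), and it has the sign structure needed for $\E(T_{q_1}-c)_+\le\E(T_{q_2}-c)_+$ for all $c$.

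The main obstacle is Step 2. I need to make sure the monotonicity of $\beta_q$ and $\gamma_q$ goes in the right direction for \emph{both} parameters simultaneously; if it did not, the log-ratio could have three monotone pieces. I also need to handle the boundary behaviour at $0$ and $\infty$ carefully. I expect the other steps to be routine once the Gamma representation of $|X_q|^p$ is in place.
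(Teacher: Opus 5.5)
Your proposal is correct and takes essentially the same route as the paper: the same generalized-gamma density for $T_q$ with both exponents strictly decreasing in $q$, a $-,+,-$ sign pattern for $f_{q_1}-f_{q_2}$, and the cut criterion with equal means. The only differences are that you prove the two-crossing property yourself, via the unimodality of $\log(f_{q_1}/f_{q_2})$, and apply the Karlin--Novikoff criterion directly on $(0,\infty)$; the paper instead cites Barthe--Naor's Lemma~9 for the crossing and passes through their symmetric Lemma~8 before invoking the standard stop-loss characterization of convex order.
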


\begin{proof}
An easy computation shows that the density of $T_q$ is
\[
f_q(t)=c_qt^{\alpha_q}\exp(-d_qt^{\beta_q}),
\qquad t>0,
\]
where $c_q,d_q>0$ and
\[
\alpha_q=\frac{1}{p(q-1)}-1,
\qquad
\beta_q=\frac{q}{p(q-1)}.
\]
Both $\alpha_q$ and $\beta_q$ are strictly decreasing in $q$.
Therefore, \cite[Lemma~9]{barthe2002hyperplane} implies that there
exist $0<A<B<\infty$ such that $f_{q_2}-f_{q_1}$ is
nonnegative outside $(A,B)$ and negative on $(A,B)$.
The difference cannot be nonnegative everywhere, since both densities have total mass one and are
distinct.

Moreover, by the choice of normalization,
\[
\int_0^\infty f_{q_j}(t)\,dt=1,
\qquad
\int_0^\infty t f_{q_j}(t)\,dt=1,
\qquad j=1,2.
\]
We may thus apply \cite[Lemma~8]{barthe2002hyperplane} to the
symmetric probability densities
\[
\widetilde f_{q_j}(x):=\frac12 f_{q_j}(|x|),
\qquad x\neq 0.
\]
Testing the resulting symmetric Choquet ordering against the
even convex functions $x\mapsto (|x|-s)_+$ gives
\[
\mathbb E(T_{q_1}-s)_+
\leq
\mathbb E(T_{q_2}-s)_+,
\qquad s\geq 0.
\]
Indeed,
\[
\int_{\R}(|x|-s)_+\widetilde{f}_q(x)=\frac{1}{2}\int_{\R}(|x|-s)_+ f_q(|x|)=\int_{0}^{\infty}(x-s)_+f_q(x)\, dx.
\]
 Since the variables $T_q$ are not symmetric, we use the nonsymmetric
counterpart of \cite[Lemma~6]{barthe2002hyperplane}, namely the standard
characterization of convex order (see for instance
\cite[Theorem~3.A.1(a)]{shaked2007stochastic}) yielding the result.
\end{proof}

\begin{proof}[Proof of Theorem \ref{thm:monotonicity-normalized-lp-projections}]
Let $1<q_1<q_2<\infty$.
For $q>1$, let $X_q$ be a random variable with density proportional to
\[
|x|^{\frac{2-q}{q-1}}
\exp\left(-|x|^{\frac{q}{q-1}}\right),
\qquad x\in\mathbb R,
\]
and put
\[
Y_q:=\frac{X_q}{(\mathbb E|X_q|^p)^{1/p}},
\qquad
T_q:=|Y_q|^p.
\]
The probabilistic representation of the $L_p$ projection body gives
\begin{equation}\label{eq:normalized-lp-projection-moments}
\frac{h^p_{\Pi_p B_q^n}(a)}{h^p_{\Pi_p B_q^n}(e_1)}
=
\mathbb E\left|\sum_{i=1}^n a_iY_{q,i}\right|^p,
\qquad a\in\mathbb R^n,
\end{equation}
where $Y_{q,1},\ldots,Y_{q,n}$ are independent copies of $Y_q$.

We now apply the replacement argument used in
\cite[Theorem~10(i)]{eskenazis2018sharp}.
Let $S$ be a symmetric random variable with a finite $p$-th moment,
independent of the variables being compared, and let $\lambda\in\mathbb R$.
By symmetry,
\[
H(t):=\mathbb E|S+\lambda t^{1/p}|^p
=
\frac12\mathbb E\left[
|S+\lambda t^{1/p}|^p+|S-\lambda t^{1/p}|^p
\right],
\qquad t\geq 0.
\]
By \cite[Lemma~12(i)]{eskenazis2018sharp} and scaling,
$H$ is convex for $1\leq p\leq 2$ and concave for $p\geq 2$.
Furthermore, symmetry and independence give
\[
\mathbb E|S+\lambda Y_q|^p=\mathbb EH(T_q).
\]
It follows from \eqref{eq:projection-variables-convex-order} that
\[
\mathbb E|S+\lambda Y_{q_1}|^p
\leq
\mathbb E|S+\lambda Y_{q_2}|^p,
\qquad 1\leq p\leq 2,
\]
with the reverse inequality for $p\geq 2$.
Replacing the independent summands one at a time therefore yields
\[
\mathbb E\left|\sum_{i=1}^n u_iY_{q_1,i}\right|^p
\leq
\mathbb E\left|\sum_{i=1}^n u_iY_{q_2,i}\right|^p,
\qquad 1\leq p\leq 2,
\]
again with the reverse inequality for $p\geq 2$.
At each replacement, the remaining sum is symmetric because its
summands are independent and symmetric.
The asserted inequalities now follow from
\eqref{eq:normalized-lp-projection-moments}.

The remaining cases $q=1,\infty$ follow from a limiting argument.
Indeed, $B_q^n\to B_1^n$ as $q\to1^+$ and
$B_q^n\to B_\infty^n$ as $q\to\infty$, in the Hausdorff metric.
Then the support functions converge uniformly, and the surface
area measures converge weakly. Formula~\eqref{L_p proj body}
completes the proof.

\end{proof}

\section*{AI tools disclosure}

All mathematical ideas, as well as their technical implementation, were developed entirely by the authors. The authors used ChatGPT 5.6 Sol and ChatGPT 6 to improve the exposition and the language, assist with literature searches, and assist in checking some of the calculations. After using these tools, the authors reviewed the content and take full responsibility for the publication.

\end{document}